\numberwithin{equation}{section}
\theoremstyle{plain}
\newtheorem{theorem}{Theorem}[section]
\newtheorem{proposition}[theorem]{Proposition}
\newtheorem{lemma}[theorem]{Lemma}
\newtheorem{corollary}[theorem]{Corollary}
\theoremstyle{remark}
\newtheorem{remark}[theorem]{Remark}
\newcommand{\QQ}{{\mathbb Q}}
\newcommand{\ZZ}{{\mathbb Z}}
\newcommand{\sym}{{\rm Sym}}
\newcommand{\univ}{{\rm univ}}
\newcommand{\sF}{{\mathcal F}}
\newcommand{\sL}{{\mathcal L}}
\newcommand{\sQ}{{\mathcal Q}}
\newcommand{\sO}{{\mathcal O}}
\begin{document}

\baselineskip=15.5pt

\title[Brauer groups of Quot schemes]{Brauer groups of Quot schemes}

\author[I. Biswas]{Indranil Biswas}

\address{School of Mathematics, Tata Institute of Fundamental Research,
Homi Bhabha Road, Bombay 400005, India}

\email{indranil@math.tifr.res.in}

\author[A. Dhillon]{Ajneet Dhillon}

\address{Department of Mathematics, Middlesex College, University of
Western Ontario, London, ON N6A 5B7, Canada}

\email{adhill3@uwo.ca}

\author[J. Hurtubise]{Jacques Hurtubise}

\address{Department of Mathematics, McGill University, Burnside
Hall, 805 Sherbrooke St. W., Montreal, Que. H3A 0B9, Canada}

\email{jacques.hurtubise@mcgill.ca}

\subjclass[2000]{14D20, 14F22, 14D23}

\keywords{Quot scheme, Brauer group, symmetric product of curve, torus action}

\date{}

\begin{abstract}
Let $X$ be an irreducible smooth complex projective curve. Let 
${\mathcal Q}(r,d)$ be the Quot scheme parametrizing all coherent
subsheaves of ${\mathcal O}^{\oplus r}_X$ of rank $r$ and degree $-d$.
There are natural morphisms ${\mathcal Q}(r,d)\,\longrightarrow\, \text{Sym}^d(X)$
and $\text{Sym}^d(X)\,\longrightarrow\,\text{Pic}^d(X)$. We prove that both
these morphisms induce isomorphism of Brauer groups if $d\, \geq\, 2$.
Consequently, the Brauer group of ${\mathcal Q}(r,d)$ is identified with the
Brauer group of $\text{Pic}^d(X)$ if $d\, \geq\, 2$.
\end{abstract}

\maketitle

\section{Introduction}

Let $X$ be an irreducible smooth projective curve defined over $\mathbb C$.
For any integer $r\, \geq\, 1$, consider the trivial holomorphic vector bundle
${\mathcal O}^{\oplus r}_X$ on $X$. For any $d\, \geq\, 0$,
let ${\mathcal Q}(r,d)$ denote the Quot scheme that parametrizes all torsion
quotients of degree $d$ of the ${\mathcal O}_X$--module ${\mathcal O}^{\oplus r}_X$.
This ${\mathcal Q}(r,d)$ is an irreducible smooth complex projective variety
of dimension $rd$.

For every $Q\, \in\, {\mathcal Q}(r,d)$, we have a corresponding short exact sequence
$$
0\, \longrightarrow\, {\mathcal F}(Q)\,\stackrel{\rho}{\longrightarrow}\,{\mathcal O}^{\oplus r}_X
\, \longrightarrow\, Q \, \longrightarrow\, 0\, .
$$
The pairs $({\mathcal O}^{\oplus r}_X)^*\,=\,
{\mathcal O}^{\oplus r}_X\,\stackrel{\rho^*}{\longrightarrow}\,
{\mathcal F}(Q)^*$ are vortices of a particular numerical type.
The Quot scheme ${\mathcal Q}(r,d)$ is a moduli space of vortices of a
particular numerical type (see \cite{BDW}, \cite{Ba}, \cite{BR} and references
therein).

Sending the above $Q$ to the scheme theoretic support of the quotient for the
homomorphism
$$
\bigwedge\nolimits^r {\mathcal F}(Q) \,\longrightarrow\,
\bigwedge\nolimits^r {\mathcal O}^{\oplus r}_X
$$
induced by the above inclusion ${\mathcal F}(Q)\,\longrightarrow\,
{\mathcal O}^{\oplus r}_X$, we get a morphism
$$
\varphi\, :\, {\mathcal Q}(r,d)\, \longrightarrow\, \text{Sym}^{d}(X)\, .
$$
Sending any $Q\, \in\, {\mathcal Q}(r,d)$ to the holomorphic line bundle
$\bigwedge^r {\mathcal F}(Q)^*$, we get a morphism
$$
\varphi'\, :\, {\mathcal Q}(r,d)\, \longrightarrow\, {\mathcal Q}(1,d)
\,=\, \text{Pic}^d(X)\, .
$$
On the other hand, we have the morphism
$$
\xi_d\, :\, \text{Sym}^{d}(X)\, \longrightarrow\, \text{Pic}^d(X)
$$
that sends any $(x_1\, ,\cdots\, ,x_d)$ to the holomorphic line bundle
${\mathcal O}_X(\sum_{i=1}^d x_i)$. Note that $\varphi'\,=\, \xi_d\circ\varphi$.

The cohomological Brauer group of a smooth complex projective variety $M$ will be
denoted by $\text{Br}'(M)$. A theorem of Gabber says that $\text{Br}'(M)$ coincides
with the Brauer group of $M$ (see \cite{dJ}).

Our aim here is to prove the following:

\begin{theorem}\label{th1}
For the morphisms $\varphi$ and $\xi_d$, the pullback homomorphisms of
Brauer groups
$$
\varphi*\, :\, {\rm Br}'({\rm Sym}^{d}(X))\, \longrightarrow\,
{\rm Br}'({\mathcal Q}(r,d))~\ \text{ and }~\
\xi^*_d\, :\,{\rm Br}'({\rm Pic}^d(X))\, \longrightarrow\,
{\rm Br}'({\rm Sym}^{d}(X))
$$
are isomorphisms provided $d\, \geq\, 2$.
\end{theorem}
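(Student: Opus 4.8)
The plan is to translate the assertion into integral singular cohomology. For any smooth complex projective variety $M$, combining the Kummer and exponential sequences yields the functorial isomorphism
\[
\text{Br}(M)\ \cong\ \bigl(H^2(M,\mathbb{Z})/\mathrm{NS}(M)\bigr)\otimes(\mathbb{Q}/\mathbb{Z})\ \oplus\ H^3(M,\mathbb{Z})_{\mathrm{tors}},
\]
where $\mathrm{NS}(M)\subset H^2(M,\mathbb{Z})$ is the subgroup of integral $(1,1)$-classes. Since this description is natural in $M$, it suffices to prove, for each $f\in\{\varphi,\xi_d\}$, that $f^*$ is an isomorphism on the transcendental quotient $(H^2/\mathrm{NS})\otimes\mathbb{Q}/\mathbb{Z}$ and on $H^3(-,\mathbb{Z})_{\mathrm{tors}}$. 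In practice I would establish the stronger statement that $f^*$ is split injective on $H^2(-,\mathbb{Z})$ with a torsion-free cokernel generated by algebraic (hence $(1,1)$) classes, and that $H^3$ is torsion-free on both sides; this at once gives an isomorphism of transcendental lattices and annihilates the $H^3$-torsion contribution.

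For $\xi_d$ I would quote Macdonald's determination of $H^*(\text{Sym}^d(X),\mathbb{Z})$. That cohomology is torsion-free, as is that of the complex torus $\text{Pic}^d(X)$, so both $H^3$-torsion groups vanish. Macdonald's generators show that, precisely when $d\geq 2$, the $\binom{2g}{2}$ products of degree-one classes (with $g$ the genus of $X$) remain independent and
\[
H^2(\text{Sym}^d(X),\mathbb{Z})\ =\ \xi_d^*H^2(\text{Pic}^d(X),\mathbb{Z})\ \oplus\ \mathbb{Z}\,\eta ,
\]
with $\eta$ the class of the effective divisor $\text{Sym}^{d-1}(X)\hookrightarrow\text{Sym}^d(X)$; for $d=1$ this splitting collapses, which is exactly why the hypothesis is needed. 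As $\eta$ is algebraic and $\xi_d^*$ is an injective morphism of Hodge structures, the transcendental quotient is unchanged, so $\xi_d^*$ is an isomorphism on $\text{Br}$.

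For $\varphi$ the engine is the torus $T=(\mathbb{C}^*)^r$ rescaling the $r$ summands of $\mathcal{O}_X^{\oplus r}$; it acts on $\mathcal{Q}(r,d)$ and leaves $\varphi$ invariant. A $T$-fixed quotient respects the splitting, hence equals $\bigoplus_i \mathcal{O}_{D_i}$ with $D_i$ effective of degree $d_i$ and $\sum_i d_i=d$, so the fixed locus is $\bigsqcup_{d_1+\cdots+d_r=d}\prod_i \text{Sym}^{d_i}(X)$. For a generic one-parameter subgroup with weights $w_1>\cdots>w_r$, the identification $T_{[Q]}\mathcal{Q}(r,d)=\bigoplus_{i,j}\text{Hom}(\mathcal{O}_X(-D_i),\mathcal{O}_{D_j})$ shows that the component indexed by $(d_1,\dots,d_r)$ carries exactly $\sum_j (j-1)d_j$ negative weights. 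Feeding this into the Bia{\l}ynicki--Birula decomposition yields two things: (i) $H^*(\mathcal{Q}(r,d),\mathbb{Z})$ is torsion-free, because every fixed component $\prod_i\text{Sym}^{d_i}(X)$ has torsion-free cohomology, so $H^3(\mathcal{Q}(r,d),\mathbb{Z})_{\mathrm{tors}}=0$; and (ii) the unique component with no negative weights is $(d,0,\dots,0)=\text{Sym}^d(X)$, and the inclusion $\text{Sym}^d(X)\hookrightarrow\mathcal{Q}(r,d)$ it provides is a section of $\varphi$. Hence $\varphi^*$ is split injective on $H^2$, its cokernel is spanned by the classes of the codimension-one strata---effective divisors, hence algebraic---and the transcendental quotient is preserved. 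Thus $\varphi^*$ is an isomorphism on $\text{Br}$, and composing with the $\xi_d$ case identifies $\text{Br}(\mathcal{Q}(r,d))$ with $\text{Br}(\text{Pic}^d(X))$.

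The step I expect to require genuine work is part (ii) for $\varphi$: extracting from the weight count that the source of the Bia{\l}ynicki--Birula flow is exactly $\text{Sym}^d(X)$ and that every extra $H^2$-class is the class of a codimension-one stratum. This is where one must compute the $T$-weights on the tangent spaces $\bigoplus_{i,j}\text{Hom}(\mathcal{O}_X(-D_i),\mathcal{O}_{D_j})$ precisely and verify the smoothness and normal-bundle hypotheses of the decomposition; granting these, both the torsion-freeness and the algebraicity of the new classes follow formally. By contrast the $\xi_d$ half reduces to Macdonald's theorem together with the single observation that $\eta$ is algebraic.
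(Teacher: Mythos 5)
Your proposal is correct in substance, but the two halves relate to the paper's proof differently. For $\varphi^*$ you follow essentially the paper's route: the $(\mathbb{C}^*)^r$-action on $\mathcal{O}_X^{\oplus r}$, the Bia{\l}ynicki--Birula decomposition with fixed components $\prod_i\text{Sym}^{d_i}(X)$ and the weight count $\sum_j(j-1)d_j$, torsion-freeness of $H^*(\mathcal{Q}(r,d),\mathbb{Z})$, and the identification of the single extra class in $H^2$ with the class of the closure of the codimension-one stratum, an effective divisor and hence a N\'eron--Severi class. Your observation that the distinguished fixed component gives a section of $\varphi$ is a mild streamlining; the paper instead checks directly that $c$ and the classes $\overline{\alpha}_i\cup\overline{\alpha}_j$ lie in the image of $\varphi^*$ by restricting to a generic fibre $\varphi^{-1}(y)\cong(\mathbb{CP}^{r-1})^d$. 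The step you flag as the real work is precisely the content of the paper's Proposition \ref{prop3}: the Carrell--Sommese integral perfection of the stratification together with an explicit rational curve $\mathbb{P}^1\subset\mathcal{Q}(2,d)$ used to compute intersection numbers and pin down the Gysin image; so your diagnosis of where the difficulty sits is accurate, though your write-up does not carry it out. For $\xi_d^*$ your route is genuinely different: you argue directly from Macdonald that $H^2(\text{Sym}^d(X),\mathbb{Z})=\xi_d^*H^2(\text{Pic}^d(X),\mathbb{Z})\oplus\mathbb{Z}\eta$ with $\eta$ the algebraic class of $x_0+\text{Sym}^{d-1}(X)$, and conclude by comparing transcendental quotients; the paper instead stabilizes via Lemma \ref{lem1} (adding a base point induces an isomorphism of Brauer groups of symmetric products) to reduce to $d>2g$, where $\text{Sym}^d(X)\to\text{Pic}^d(X)$ is the projectivization of $\mathrm{pr}_*\mathcal{L}$ and Gabber's projective-bundle theorem for $\text{Br}'$ applies. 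Your version is more self-contained and works uniformly for all $d\geq 2$ without the large-$d$ reduction, at the cost of having to verify the integral splitting of $H^2$ and the compatibility with $(1,1)$-classes by hand; the paper's version outsources that to the geometry of the projective bundle. One small inaccuracy: Schr\"oer's result gives a short exact sequence with $H^3(M,\mathbb{Z})_{\rm tor}$ as quotient, not a direct sum decomposition of $\text{Br}'(M)$ --- harmless here since that torsion group vanishes for every variety in play.
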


Theorem \ref{th1} is proved in Lemma \ref{lem2} and Lemma \ref{lem3}.

If $\text{genus}(X)\,=\, 1$, then ${\rm Sym}^{d}(X)$ is a projective bundle over $X$,
and hence ${\rm Br}'({\rm Sym}^{d}(X))\,=\, 0$. If $\text{genus}(X)\,=\, 0$, then
${\rm Br}'({\rm Sym}^{d}(X))\,=\, 0$ because ${\rm Sym}^{d}(X)\,=\,
{\mathbb C}{\mathbb P}^d$. Therefore, Theorem \ref{th1} has the following
corollary:

\begin{corollary}\label{cor-i}
If ${\rm genus}(X)\,\leq \, 1$, then ${\rm Br}'({\mathcal Q}(r,d))\,=\, 0$.
\end{corollary}

Since ${\mathcal Q}(r,1)$ is a projective bundle over $X$, it follows that
${\rm Br}'({\mathcal Q}(r,d))\, =\, 0$. Note that
${\rm Br}'({\rm Pic}^d(X))$ is nonzero if $\text{genus}(X)\,>\, 1$, while
${\rm Br}'({\rm Sym}^{1}(X))\, =\, 0$.

Fixing a point $x_0\, \in\, X$, construct an embedding
$$
\delta\, :\, {\mathcal Q}(r,d)\, \longrightarrow\, {\mathcal Q}(r,d+r)
$$
by sending any subsheaf ${\mathcal F}\,\subset\,{\mathcal O}^{\oplus r}_X$
to ${\mathcal F}\otimes{\mathcal O}_X(-x_0)$.

The following is proved in Corollary \ref{cor-p}:

\begin{proposition}\label{co-i}
The pullback homomorphism for $\delta$
$$
\delta^*\, :\, {\rm Br}'({\mathcal Q}(r,d+r))\, \longrightarrow\,
{\rm Br}'({\mathcal Q}(r,d))
$$
is an isomorphism if $d\, \geq\, 2$.
\end{proposition}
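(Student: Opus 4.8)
The plan is to deduce the statement from Theorem~\ref{th1} by showing that $\delta$ is compatible, through the maps to the Picard schemes, with a translation isomorphism of Picard torsors. Write $\varphi'_d$ for the given morphism $\varphi'\colon \mathcal Q(r,d)\to\text{Pic}^d(X)$ and $\varphi'_{d+r}$ for its analogue on $\mathcal Q(r,d+r)$. First I would record the effect of $\delta$ on the attached subsheaf: if $Q\in\mathcal Q(r,d)$ has subsheaf $\sF(Q)\subset\sO_X^{\oplus r}$, then by definition $\sF(\delta(Q))=\sF(Q)\otimes\sO_X(-x_0)$, and since $\sF(Q)$ has rank $r$ (being a subsheaf of a vector bundle on a smooth curve it is locally free),
$$
\textstyle\bigwedge^r\sF(\delta(Q))^*\;=\;\big(\bigwedge^r\sF(Q)^*\big)\otimes\sO_X(rx_0).
$$
Letting $t\colon \text{Pic}^d(X)\to\text{Pic}^{d+r}(X)$, $L\mapsto L\otimes\sO_X(rx_0)$, be the translation isomorphism, this is exactly the pointwise identity $\varphi'_{d+r}\circ\delta=t\circ\varphi'_d$.

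Next I would upgrade this to an identity of morphisms of schemes, using the universal subsheaf $\tilde\sF$ on $\mathcal Q(r,d+r)\times X$. The point is that pulling $\tilde\sF$ back along $\delta\times\mathrm{id}_X$ yields $\tilde\sF_d\otimes p_X^*\sO_X(-x_0)$, where $\tilde\sF_d$ is the universal subsheaf on $\mathcal Q(r,d)\times X$ and $p_X$ is the projection to $X$. Because tensoring a family of line bundles on $S\times X$ by a line bundle pulled back from $X$ changes the induced classifying map to $\text{Pic}(X)$ only by the corresponding translation, the square
$$
\varphi'_{d+r}\circ\delta\;=\;t\circ\varphi'_d
$$
commutes as a diagram of morphisms of varieties.

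Applying $\text{Br}(-)$ and using contravariance gives $\delta^*\circ(\varphi'_{d+r})^*=(\varphi'_d)^*\circ t^*$. Here $t$ is an isomorphism of varieties, so $t^*$ is an isomorphism of Brauer groups. Moreover $\varphi'=\xi\circ\varphi$, so $(\varphi'_d)^*=\varphi^*\circ\xi_d^*$ and likewise $(\varphi'_{d+r})^*=\varphi^*\circ\xi_{d+r}^*$; by Theorem~\ref{th1}, applied in the two relevant degrees (both of which are $\geq 2$), each factor is an isomorphism, hence $(\varphi'_d)^*$ and $(\varphi'_{d+r})^*$ are isomorphisms. Therefore
$$
\delta^*\;=\;(\varphi'_d)^*\circ t^*\circ\big((\varphi'_{d+r})^*\big)^{-1}
$$
is a composite of isomorphisms, proving that $\delta^*$ is an isomorphism.

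I expect the only genuine work to be the scheme-theoretic step: verifying the identification $(\delta\times\mathrm{id}_X)^*\tilde\sF\cong\tilde\sF_d\otimes p_X^*\sO_X(-x_0)$ of universal subsheaves, and confirming that the induced modification of the map to $\text{Pic}^{d+r}(X)$ is precisely the translation $t$. Once that functorial statement is in hand the rest is formal, since translations on the Picard torsors are always isomorphisms and Theorem~\ref{th1} supplies the remaining identifications in the degrees under consideration.
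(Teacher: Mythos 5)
Your argument is correct and in substance follows the same pattern as the paper's: intertwine $\delta$ with a ``shift'' map on the base of a commutative square, then invoke the Brauer-group isomorphisms already established for the vertical arrows. The difference is the choice of base. The paper uses the square with $\varphi\colon\mathcal Q(r,d)\to\sym^d(X)$ over the map $h\colon\sym^d(X)\to\sym^{d+r}(X)$, $D\mapsto D+r\cdot x_0$, handling the bottom arrow by Lemma \ref{lem1} (iterated $r$ times) and the vertical arrows by Lemma \ref{lem3}; you descend one level further to $\mathrm{Pic}$, where the bottom arrow becomes the translation $t$, an honest isomorphism of varieties, at the price of also needing $\xi^*_d$ and $\xi^*_{d+r}$ to be isomorphisms (Lemma \ref{lem2}). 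Since Lemma \ref{lem2} is itself proved via Lemma \ref{lem1}, the two routes consume the same inputs; yours is the more formal deduction once Theorem \ref{th1} is granted, and your identification $\varphi'_{d+r}\circ\delta=t\circ\varphi'_d$ (via $\bigwedge^r(\mathcal F\otimes\mathcal O_X(-x_0))^*=(\bigwedge^r\mathcal F)^*\otimes\mathcal O_X(rx_0)$ and the pullback of the universal subsheaf) is sound. One point you should make explicit: your phrase ``both of which are $\geq 2$'' silently assumes $d\geq 2$. That assumption is genuinely needed, not an artifact of the method: for $d\leq 1$ one has ${\rm Br}(\mathcal Q(r,d))=0$ while ${\rm Br}(\mathcal Q(r,d+r))\cong{\rm Br}(\mathrm{Pic}^{d+r}(X))\neq 0$ once $g\geq 2$ and $d+r\geq 2$, so $\delta^*$ cannot be an isomorphism there. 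Indeed the paper's own Corollary \ref{cor-p} carries the hypothesis $d\geq 2$, and the ``$g\geq 2$'' in the statement as quoted should be read as (or corrected to) that condition.
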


Now assume that $r,\, \text{genus}(X)\, \geq\, 2$; if $r\,=\, 2$, then also assume that
$\text{genus}(X)\, \geq\, 3$. Iterating the above morphism $\delta$, we get an
ind-scheme. This ind-scheme has the cohomology isomorphic to the moduli stack; see
\cite[Theorem 4.5]{d} or \cite[Chapter 4]{n}. Using
Proposition \ref{co-i} and Theorem \ref{th1}, one can now describe the cohomological
Brauer group of the moduli stack of rank $r$ and degree $d$ bundles.
Further one deduces that the cohomological Brauer
group of the moduli stack of vector bundles on $X$ of rank $r$ and fixed determinant
vanishes. This result was proved earlier in \cite[Theorem 5.2]{BH}. Using this vanishing result it
can be deduced that the cohomological Brauer group of the moduli space of stable vector bundles on
$X$ of rank $r$ and fixed determinant of degree $d$ is a cyclic group
of order $\text{g.c.d.}(r\, ,d)$. This result was proved earlier in \cite{BBGN}.

\section{Cohomological Brauer group}

Let $M$ be an irreducible smooth projective variety defined over $\mathbb C$. Let 
${\mathcal O}^*_M$ denote the multiplicative sheaf on $M$ of holomorphic
functions with values in ${\mathbb C}\setminus \{0\}$. The \textit{cohomological Brauer
group} ${\rm Br}'(M)$ is the torsion subgroup of the
cohomology group $H^2(M,\, {\mathcal O}^*_M)$. 

Let ${\mathcal O}_M$ denote the sheaf of holomorphic functions on $M$.
Consider the short exact sequence of sheaves on $M$
$$
0\, \longrightarrow\, {\mathbb Z}\, \longrightarrow\, {\mathcal O}_M
\, \stackrel{\exp}{\longrightarrow}\,{\mathcal O}^*_M\, \longrightarrow\, 0\, ,
$$
where the homomorphism ${\mathbb Z}\, \longrightarrow\, {\mathcal O}_M$ sends
any integer $n$ to the constant function $2\pi\sqrt{-1}\cdot n$. Let
\begin{equation}\label{e1}
\text{Pic}(M)\,=\, H^1(M,\, {\mathcal O}^*_M)\, \stackrel{c}{\longrightarrow}\,
H^2(M,\, {\mathbb Z})\, \longrightarrow\,H^2(M,\, {\mathcal O}_M) 
\end{equation}
be the corresponding long exact sequence of cohomology groups. The homomorphism
$c$ in \eqref{e1} sends a holomorphic line bundle to its first Chern class. The
image $c(\text{Pic}(M))$ coincides with the N\'eron--Severi group
$$
\text{NS}(M)\, :=\, H^{1,1}(M)\cap H^2(M,\, {\mathbb Z})\, .
$$
Define the subgroup
\begin{equation}\label{e2}
A\, :=\, H^2(M,\, {\mathbb Z})/c(\text{Pic}(M))
\,=\, H^2(M,\, {\mathbb Z})/\text{NS}(M)\, \subset\,
H^2(M,\, {\mathcal O}_M)
\end{equation}
(see \eqref{e1}). Let
$$
H^3(M,\, {\mathbb Z})_{\rm tor}\, \subset\, H^3(M,\, {\mathbb Z})
$$
be the torsion part.

\begin{proposition}[\cite{Sc}]\label{prop1}
There is a natural short exact sequence
$$
0\,\longrightarrow\, A\otimes_{\mathbb Z} ({\mathbb Q}/{\mathbb Z})\,\longrightarrow\,
{\rm Br}'(M)\,\longrightarrow\, H^3(M,\, {\mathbb Z})_{\rm tor} \,\longrightarrow\, 0\, ,
$$
where $A$ is defined in \eqref{e2}.
\end{proposition}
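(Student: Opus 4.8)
The plan is to read off the asserted sequence directly from the long exact cohomology sequence of the exponential sheaf sequence, and then to pass to torsion subgroups, the only real subtlety being that the torsion functor is not right exact. First I would continue the sequence \eqref{e1} one step further, obtaining the portion
$$
H^2(M,\ZZ)\,\stackrel{\beta}{\longrightarrow}\, H^2(M,\sO_M)\,\stackrel{\exp_*}{\longrightarrow}\,
H^2(M,\sO^*_M)\,\stackrel{\partial}{\longrightarrow}\, H^3(M,\ZZ)\,\stackrel{\gamma}{\longrightarrow}\,
H^3(M,\sO_M)\, .
$$
By exactness of \eqref{e1} the kernel of $\beta$ is $c(\text{Pic}(M))\,=\,\text{NS}(M)$, so $\beta$ maps $A\,=\,H^2(M,\ZZ)/\text{NS}(M)$ isomorphically onto its image $\ker(\exp_*)$; this is exactly the realization of $A$ as a subgroup of $H^2(M,\sO_M)$ recorded in \eqref{e2}. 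Exactness then yields the short exact sequence
$$
0\,\longrightarrow\, H^2(M,\sO_M)/A\,\longrightarrow\, H^2(M,\sO^*_M)\,
\stackrel{\partial}{\longrightarrow}\, \ker(\gamma)\,\longrightarrow\, 0\, ,
$$
and the proposition should follow by applying the torsion functor to this sequence, since ${\rm Br}'(M)$ is by definition the torsion subgroup of $H^2(M,\sO^*_M)$.

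Since the torsion functor is left exact, this gives an exact sequence $0\to (H^2(M,\sO_M)/A)_{\rm tor}\to {\rm Br}'(M)\to \ker(\gamma)_{\rm tor}$, and I would next identify the two outer terms. The group $H^2(M,\sO_M)$ is a complex vector space, hence a divisible torsion-free abelian group, while $A$ is finitely generated (being a quotient of $H^2(M,\ZZ)$) and torsion-free (being a subgroup of $H^2(M,\sO_M)$), so $A\,\cong\,\ZZ^k$. Tensoring $0\to\ZZ\to\QQ\to\QQ/\ZZ\to 0$ with the flat module $A$ then identifies the torsion of the quotient as $(H^2(M,\sO_M)/A)_{\rm tor}\,\cong\,(\QQ\cdot A)/A\,\cong\, A\otimes_\ZZ(\QQ/\ZZ)$. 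For the right-hand term, $H^3(M,\sO_M)$ is likewise torsion-free, so every torsion class of $H^3(M,\ZZ)$ maps to zero under $\gamma$ and hence lies in $\ker(\gamma)$; therefore $\ker(\gamma)_{\rm tor}\,=\,H^3(M,\ZZ)_{\rm tor}$.

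The main point, and the only place where right exactness could fail, is the surjectivity of the map ${\rm Br}'(M)\to H^3(M,\ZZ)_{\rm tor}$, and here I would exploit the divisibility of $H^2(M,\sO_M)$. Given $x\in H^3(M,\ZZ)_{\rm tor}$ with $nx=0$, surjectivity of $\partial$ onto $\ker(\gamma)$ provides some $c$ with $\partial(c)=x$; then $nc\in\ker(\partial)=\exp_*(H^2(M,\sO_M))$, say $nc=\exp_*(w)$, and dividing $w$ by $n$ inside the divisible group $H^2(M,\sO_M)$ yields $w'$ with $nw'=w$. The class $c-\exp_*(w')$ is then $n$-torsion and still satisfies $\partial(c-\exp_*(w'))=x$, so it lifts $x$ into ${\rm Br}'(M)$. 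Combining the three identifications yields the desired short exact sequence; I expect this divisibility-driven surjectivity to be the crux of the argument, everything else being formal manipulation of the exponential sequence together with the torsion-freeness of the coherent cohomology groups.
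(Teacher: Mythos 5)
Your proof is correct: the identification of $A$ with $\ker(\exp_*)$, the computation of the two torsion groups using that $H^2(M,\mathcal{O}_M)$ and $H^3(M,\mathcal{O}_M)$ are $\mathbb{C}$-vector spaces, and the divisibility argument for surjectivity onto $H^3(M,\mathbb{Z})_{\rm tor}$ are all sound. The paper itself gives no proof but simply cites Schr\"oer, and your argument is essentially the standard exponential-sequence proof found there, so nothing further is needed.
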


See \cite[p. 878. Proposition 1.1]{Sc} for a proof of Proposition
\ref{prop1}.

\section{The cohomology of symmetric products}\label{scsp}

Let $X$ be an irreducible smooth complex projective curve.
The genus of $X$ will be denoted by $g$.
For any positive integer $d$, let $P_d$
be the group of all permutations of $\{1\, ,\cdots\, ,d\}$. By
$\text{Sym}^d(X)$ we will denoted the quotient of $X^d$ for the
natural action of $P_d$ on it. So $\text{Sym}^d(X)$ parametrizes
all formal sums of the form $\sum_{x\in X}n_x\cdot x$,
where $n_x$ are nonnegative integers with $\sum_{x\in X}n_x\,=\, d$. In other
words, $\text{Sym}^d(X)$ parametrizes all effective divisors on $X$ of degree $d$.
This $\text{Sym}^d(X)$ is an irreducible smooth complex projective variety of
complex dimension $d$. Let
\begin{equation}\label{q}
q_d\, :\, X^d\,\longrightarrow\, \text{Sym}^d(X)\,=\, X^d/P_d
\end{equation}
be the quotient map.

Let $\alpha_1\, , \alpha_2\, , \cdots\, , \alpha_{2g}$ be a symplectic basis for
$H^1(X,\, \ZZ)$ chosen so that
$\alpha_i\cdot \alpha_{i+g}\,=\,1$ for $i\, \leq\, g$, and $\alpha_i\cdot
\alpha_j\,=\,0$ if $\vert i-j\vert\, \not=\, g$. The oriented generator
of $H^2(X,\, \ZZ)$ will be denoted by $\omega$. For $i\, \in\, [1\, ,2g]$
and $j\, \in\, [1\, ,d]$, we have the cohomology classes
\begin{equation}\label{la}
\lambda^j_i \, := \,1\otimes\cdots \otimes \alpha_i \otimes \cdots \otimes 1
\,\in\, H^1(X^n, \, \ZZ)
\end{equation}
and
\begin{equation}\label{et}
 \eta^j \,:=\, 1\otimes \cdots \otimes \omega \otimes \cdots \otimes 1
\,\in\, H^2(X^n,\, \ZZ)\, ,
\end{equation}
where both $\alpha_i$ and $\omega$ are at the $j$-th position.

\begin{theorem}[\cite{Ma}]\label{t:macdonald}
For the morphism $q_d$ in \eqref{q}, the pullback homomorphism
 \[
q^*_d \,: H^*(\sym^d(X),\,\ZZ)\,\longrightarrow\, H^*(X^d,\, \ZZ)
 \]
is injective. Further, the image of $q^*_d$ is generated, as a $\ZZ$--algebra, by
\[
\, \lambda_i \,= \sum_{j=1}^d \lambda_i^j\, ,~ 1\,\le\, i \,\le\, 2g\, ,
\, ~ \ and ~\ \eta = \sum_{j=1}^d \eta^j\, .
\]
\end{theorem}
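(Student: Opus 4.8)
The plan is to reduce the computation on $\sym^d(X)$ to invariant theory on $X^d$ and then bootstrap the resulting rational statement to an integral one. By the Künneth formula $H^*(X^d,\ZZ)\,=\,H^*(X,\ZZ)^{\otimes d}$, and $H^*(X,\ZZ)$ is the free $\ZZ$--module on $1$, the classes $\alpha_1,\dots,\alpha_{2g}$ in degree $1$, and $\omega$ in degree $2$. The group $P_d$ acts by permuting the tensor factors, with the Koszul sign rule forcing a sign whenever two odd classes are transposed. First I would record that $q_d$ is a finite surjective morphism of degree $d!$ between rational homology manifolds, so there is a transfer homomorphism $\tau\,:\,H^*(X^d,\ZZ)\,\to\, H^*(\sym^d(X),\ZZ)$ with $\tau\circ q_d^*\,=\,d!\cdot\mathrm{id}$ and $q_d^*\circ\tau\,=\,\sum_{\sigma\in P_d}\sigma^*$. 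In particular $\ker q_d^*$ is killed by $d!$, and over $\QQ$ the map $q_d^*$ identifies $H^*(\sym^d(X),\QQ)$ with the invariants $(H^*(X,\QQ)^{\otimes d})^{P_d}$.

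Next I would compute these invariants. Because each $\alpha_i$ is odd, any symmetrized tensor in which a given $\alpha_i$ occurs in two distinct factors is annihilated by the transposition of those factors; hence an invariant involves each $\alpha_i$ in at most one factor. This shows that $(H^*(X,\QQ)^{\otimes d})^{P_d}$ is spanned by the symmetrizations indexed by a subset $S\subseteq\{1,\dots,2g\}$ together with a number $k\ge 0$ of factors carrying $\omega$, subject to $|S|+k\le d$. Each such symmetrization is a rational multiple of the corresponding monomial $\eta^{\,k}\prod_{i\in S}\lambda_i$ in the power sums $\lambda_i\,=\,\sum_j\lambda_i^j$ and $\eta\,=\,\sum_j\eta_j$; note $\lambda_i^2\,=\,0$ because $\alpha_i$ is odd. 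Consequently the $\lambda_i$ and $\eta$ generate the rational image, which yields the asserted generation statement after tensoring with $\QQ$.

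The substantive point is to make both conclusions integral: that $q_d^*$ is injective over $\ZZ$ (equivalently, that $H^*(\sym^d(X),\ZZ)$ is torsion free, so the $d!$--torsion kernel produced by the transfer is trivial), and that the image lattice is exactly the $\ZZ$--subalgebra generated by $\lambda_i$ and $\eta$ rather than the possibly larger full invariant lattice. Indeed these two lattices differ rationally only by denominators: for instance $\eta^2\,=\,2\sum_{j<j'}\eta_j\eta_{j'}$, so the primitive invariant $\sum_{j<j'}\eta_j\eta_{j'}$ is more divisible than $\eta^2$, and the theorem asserts that only $\eta^2$, not its half, lies in the image. I expect this integral bookkeeping to be the main obstacle.

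To resolve it I would argue by induction on $d$ using the Abel--Jacobi morphism $u_d\,:\,\sym^d(X)\,\to\,\text{Pic}^d(X)$ together with the hyperplane embedding. For $d\ge 2g-1$ the fibers of $u_d$ are the linear systems $\mathbb P^{d-g}$, so $u_d$ is a projective bundle, and Leray--Hirsch with the relative hyperplane class $\eta$ shows $H^*(\sym^d(X),\ZZ)$ is torsion free and is freely generated as a module over $H^*(\text{Pic}^d(X),\ZZ)$ by $1,\eta,\dots,\eta^{d-g}$; since $H^*(\text{Pic}^d(X),\ZZ)\,=\,\bigwedge^* H^1(X,\ZZ)$ is generated in degree one by classes pulling back under $u_d^*$ to the $\lambda_i$, the ring is generated integrally by $\lambda_i$ and $\eta$, and $q_d^*$ is injective with the stated image. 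For smaller $d$ I would descend along the closed embedding $\iota\,:\,\sym^{d-1}(X)\,\hookrightarrow\,\sym^d(X)$, $D\mapsto D+x_0$, whose open complement consists of the divisors avoiding $x_0$; comparing the associated Gysin sequences with the inductive hypothesis, and matching ranks against the Betti numbers read off from the generating series $\sum_{d\ge0}P_t(\sym^d(X))z^d\,=\,(1+tz)^{2g}/((1-z)(1-t^2z))$, lets one propagate torsion freeness and the generation statement down to all $d$, completing the proof.
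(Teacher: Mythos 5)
The paper does not actually prove this statement: it is quoted from Macdonald, whose argument is a direct computation with symmetric products of chain complexes, so you are attempting more than the authors do. Your first three paragraphs are sound and, to your credit, isolate exactly the right difficulty: the $\ZZ$--subalgebra generated by $\lambda_i$ and $\eta$ is strictly smaller than the full invariant lattice (your example $\eta^2=2\sum_{j<j'}\eta^j\eta^{j'}$ is correct; compare $\sym^2(\mathbb{P}^1)=\mathbb{P}^2$, where the preimage of a point consists of two points). The case $d\ge 2g-1$ via Leray--Hirsch for the projective bundle $u_d$ is also complete in outline: torsion freeness, injectivity of $q_d^*$, and the identification of the image all follow, since $\eta$ restricts to the hyperplane class on the fibers $|L|\cong\mathbb{P}^{d-g}$ and the degree-one classes of $\mathrm{Pic}^d(X)$ pull back to the $\lambda_i$.

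The gap is the descent to $d<2g-1$, which is precisely the integral bookkeeping you yourself call the main obstacle. The Gysin sequence for $\sym^{d-1}(X)\hookrightarrow\sym^d(X)$ has third term $H^*(\sym^d(X\setminus\{x_0\}),\ZZ)$, the integral cohomology of a symmetric product of an open curve; you would need to compute this (including torsion) independently, which is a task comparable to the original problem, and ``matching ranks against the Betti numbers'' from Macdonald's generating series only controls the rational picture, not torsion or the integral image lattice. Moreover, even granting exactness, the Gysin maps do not obviously commute with $q_d^*$ in the way needed to conclude that the image of $q_{d-1}^*$ is the subring generated by $\lambda_i,\eta$ rather than something intermediate between it and the invariant lattice. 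A cleaner way to close both points is the classical splitting theorem for symmetric products (Steenrod--Dold--Nakaoka): the basepoint inclusion $f_d:\sym^d(X)\to\sym^{d+1}(X)$ induces a split injection on integral homology, hence a split surjection $f_d^*$ on cohomology. Downward induction from $d\ge 2g-1$ then gives torsion freeness (equivalently injectivity of $q_d^*$, since the transfer shows the kernel is torsion), and the factorization of $X^d\to X^{d+1}\to\sym^{d+1}(X)$ through $\sym^d(X)$ identifies the image of $q_d^*$ with the restriction of the image of $q_{d+1}^*$, which is generated by $\lambda_i$ and $\eta$. As written, your final paragraph asserts the conclusion of the induction rather than supplying the mechanism that makes it integral.
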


See \cite[p. 325, (6.3)]{Ma} and \cite[p. 326, (7.1)]{Ma} for Theorem
\ref{t:macdonald}.

There is a universal divisor $D^{\rm univ}$ on $\sym^d(X)\times X$ which consists
of all $(z\, ,x)\,\in\, \sym^d(X)\times X$ such that $x$ is in the support of $z$.
We wish to describe the class of this divisor
in $H^2(\sym^d(X)\times X,\, \ZZ)$. In view of the first part of
Theorem \ref{t:macdonald}, the algebra $H^*(\sym^d(X)\times X,\, \ZZ)$
is considered as a subalgebra of $H^*(X^{d+1},\,\ZZ)$.

For $i\,\in\, [1\, ,d+1]$, let $\pi_i\, :\, X^{d+1}\,\longrightarrow\, X$
be the projection to the $i$--th factor. For any integer
$k\,\in\, [1\, ,d]$, consider the closed immersion
\[
 \iota_k\, :\, X^d\,\hookrightarrow\, X^{d+1}
\]
which is uniquely determined by
\[
 \pi_i \circ \iota_k \,=\,
\begin{cases}
 \pi'_i & \text{if } i\,\ne\, d+1 \\
 \pi'_k & \text{if } i\,=\, d+1
\end{cases}
\]
where $\pi'_j$ is the projection of $X^d$ to the $j$--th factor.
In other words, $i_k(x_1\, , \cdots\, , x_k\, , \cdots\, , x_d)\,=\,
i_k(x_1\, , \cdots\, , x_k\, , \cdots\, , x_d\, , x_k)$.
The divisor on $X^{d+1}$ given by the image of $\iota_k$ will be
denoted by $D_k$.

The above divisor $D_k$ is closely related to the universal divisor
$D^{\rm univ}$ defined above. To see this, consider the projection
$$
q_d\times \text{Id}_X\, :\, X^{d+1} \,=\, X^d\times X^d \,\longrightarrow\,
\text{Sym}^d(X)\times X\, ,
$$
where $q_d$ is constructed in \eqref{q}. The image
$(q_d\times \text{Id}_X)(D_k)$ is independent of the choice of $k$ and it
coincides with $D^{\rm univ}$. This implies that $D^{\rm univ}$ is irreducible.

The classes
\[
 \lambda_i^j \cup \lambda_{i'}^{j'}\, ,\quad i\,\ne\, i'\, ,\ 1\,\le\, j\,<\,j'
\,\le \,d+1\, ,
\]
and 
\[
 \eta^j\, , \quad 1\,\le\, j\,\le\, d+1\, ,
\]
constructed as in \eqref{la} and \eqref{et} for $d+1$,
together give a basis for ${H^2}(X^{d+1},\, \ZZ)$. We have the dual basis for
$H^{2d}(X^{d+1},\, \ZZ)$ given by
\[
 \eta^{j\vee} \,=\,
\omega\otimes\cdots \otimes\omega\otimes 1 \otimes\omega\otimes\cdots \otimes\omega
\]
and
\[
 \left( \lambda_i^j \cup \lambda_{i'}^{j'} \right)^\vee \,=\, \omega\otimes \cdots 
 \otimes\omega\otimes \widetilde{\alpha}_i \otimes\omega\otimes \cdots 
 \omega \otimes \widetilde{\alpha}_{i'}\otimes \omega\otimes\cdots \otimes\omega\, ,
\]
where $\widetilde{\alpha}_i$ (respectively, $\widetilde{\alpha}_{i'}$) is the class with
$\widetilde{\alpha}_{i}\cup\alpha_i \,=\, \omega$ (respectively, $\widetilde{\alpha}_{i'}
\cup\alpha_{i'} \,=\, \omega)$.
Now
\[
 \int_{D_k} \eta^{j\vee} \,=\, \int_{X^d} \iota_k^*\eta^{j\vee} \,=\, 
 \begin{cases}
 1 & j=k \\
 1 & j=d+1 \\
 0 & \text{otherwise}
 \end{cases}
\]
while
\[
 \int_{D_k} \left( \lambda_i^j \cup \lambda_{i'}^{j'} \right)^\vee \,=\,
\int_X \widetilde{\alpha}_i\cup \widetilde{\alpha}_{i'}
\]
if $j'\,=\,d+1$ and $j\,=\, k$, and
$$
\int_{D_k} \left( \lambda_i^j \cup \lambda_{i'}^{j'} \right)^\vee \,=\, 0
$$
otherwise. So the class of $D_k$ is
\[
 \eta^k + \eta^{d+1} + \sum_{i=1}^g \lambda_i^k \cup \lambda_{i+g}^{d+1}
 - \sum_{i=g+1}^{2g} \lambda_i^k \cup \lambda_{i-g}^{d+1}\, .
\]
By the K\"unneth formula, we have
$$
 H^2(\sym^{d}(X)\times X,\,\ZZ)\, \cong\, \left(H^2(\sym^d(X), \,\ZZ)\otimes H^0(X,\,\ZZ) \right)
$$
$$
\oplus \left(H^0(\sym^d(X),\, \ZZ)\otimes H^2(X,\,\ZZ)\right) \oplus
\left(H^1(\sym^d(X),\, \ZZ)\otimes H^1(X,\, \ZZ)\right)\, .
$$
Using Theorem \ref{t:macdonald}
\eqref{t:macdonald}, we have a basis for $H^2(\sym^{d}(X)\times X,\, \ZZ)$
consisting of
\[
 \eta\otimes 1_X\, , \ \{(\lambda_i\cup \lambda_j)\otimes 1_X\}_{i,j=1}^{2g}
\, ,\ 1_{\sym^d(X)}
\otimes \omega\, ,\ \{\lambda_i\otimes \alpha_j\}_{i,j=1}^{2g}\, .
\]
{}From the above computations it follows that the class of $D^{\rm univ}$ is
\begin{equation}\label{D}
 [D^{\rm univ}]\,=\, \eta\otimes 1 + d(1_{\sym^d(X)}\otimes \omega) +
\sum_{i=1}^g \lambda_i\otimes \alpha_{i+g} 
 - \sum_{i=g+1}^{2g} \lambda_i\otimes\alpha_{i-g}\, .
\end{equation}

\begin{proposition}\label{p:classes}
\mbox{} 
\begin{enumerate}
 \item For a fixed point $x_0\,\in\, X$, consider the inclusion 
 \[
 \iota_{x_0} \,:\, \sym^d(X)\hookrightarrow \sym^d(X)\times X
 \]
defined by $z\, \longmapsto\, (z\, ,x_0)$.
The cohomology class $\iota_{x_0}^*[D^{\rm univ}]$ is $\eta$.

\item The slant product of $[D^{\rm univ}]$ with 
$\alpha_i^\vee$ produces the class $\lambda_i$ in $H^1(\sym^d(X),\,\ZZ)$.
 \end{enumerate}
\end{proposition}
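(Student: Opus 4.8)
The plan is to read off both assertions directly from the explicit expression \eqref{D} for $[D^{\rm univ}]$ together with the K\"unneth decomposition of $H^2(\sym^d(X)\times X,\,\ZZ)$ recorded just before it. In both cases the relevant operation---pullback along $\iota_x$ in (1), and slant product with $\alpha_i^\vee$ in (2)---respects the K\"unneth splitting, so it suffices to evaluate it on each of the four types of summand $\eta\otimes 1$, $1_{\sym^d(X)}\otimes\omega$, $\lambda_i\otimes\alpha_{i+g}$, and $\lambda_i\otimes\alpha_{i-g}$ appearing in \eqref{D}.

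For part (1), I would factor $\iota_x$ as $\mathrm{id}_{\sym^d(X)}\times j$, where $j\,:\,\{x_0\}\hookrightarrow X$ is the inclusion of the chosen point. On a decomposable class this gives $\iota_x^*(a\otimes b)\,=\, a\cdot j^*b$ for $a\in H^*(\sym^d(X),\,\ZZ)$ and $b\in H^*(X,\,\ZZ)$. Since a point carries no cohomology in positive degree, $j^*$ annihilates $\omega$ and every $\alpha_j$ while sending $1_X$ to $1$; hence every summand of \eqref{D} involving $\omega$ or an $\alpha_j$ dies, and only $\eta\otimes 1$ survives, restricting to $\eta\in H^2(\sym^d(X),\,\ZZ)$. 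This yields $\iota_x^*[D^{\rm univ}]\,=\,\eta$.

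For part (2), recall that the slant product $/\,:\,H^2(\sym^d(X)\times X,\,\ZZ)\otimes H_1(X,\,\ZZ)\,\longrightarrow\, H^1(\sym^d(X),\,\ZZ)$ sends a decomposable class $a\otimes b$ to $\langle b,\,\alpha_i^\vee\rangle\,a$, where $\langle\,\cdot\,,\,\cdot\,\rangle$ is the evaluation pairing and $\alpha_i^\vee\in H_1(X,\,\ZZ)$ is the class dual to $\alpha_i$ with respect to the intersection form. Applied to \eqref{D}, the first two summands contribute nothing, because $1_X$ and $\omega$ lie in degrees $0$ and $2$ and therefore pair trivially with the degree-one class $\alpha_i^\vee$. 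Only the terms $\lambda_k\otimes\alpha_{k\pm g}$ remain, and the pairings $\langle\alpha_{k\pm g},\,\alpha_i^\vee\rangle$ isolate a single one of them; the symplectic normalization $\alpha_i\cdot\alpha_{i+g}\,=\,1$ that fixes $\alpha_i^\vee$ then forces the surviving coefficient to be exactly $\lambda_i$.

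The computations are routine once \eqref{D} is available. The only delicate point is the index-and-sign bookkeeping in part (2): one must track the symplectic convention $\alpha_i\cdot\alpha_{i+g}\,=\,1$ carefully through the two sums $\sum_{i=1}^g\lambda_i\otimes\alpha_{i+g}$ and $-\sum_{i=g+1}^{2g}\lambda_i\otimes\alpha_{i-g}$ so that they recombine to give $\lambda_i$ with the correct sign, rather than $\pm\lambda_{i\pm g}$. This is the main (and essentially the only) obstacle, and it is dispatched purely by bookkeeping.
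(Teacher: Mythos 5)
Your proposal is correct and follows exactly the route the paper takes: the paper's entire proof is the one-line assertion that both claims follow from the explicit formula \eqref{D}, and your argument simply spells out the term-by-term evaluation (pullback killing positive-degree classes on $X$ in part (1), and the slant product isolating the $\lambda_i\otimes\alpha_{i\pm g}$ terms in part (2)) that the paper leaves implicit.
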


\begin{proof}
These follow from \eqref{D}.
\end{proof}

\section{Cohomological Brauer group of the symmetric product}\label{sec3}

Recall that $X$ denotes a smooth projective curve.
Fix a point $x_0\,\in\, X$. For any $d\, \geq\, 1$, let
\begin{equation}\label{e3}
f_d\, :\, \text{Sym}^d(X)\,\longrightarrow\, \text{Sym}^{d+1}(X)
\end{equation}
be the morphism defined by $\sum_{x\in X}n_x\cdot x\, \longmapsto\,
x_0+\sum_{x\in X}n_x\cdot x$. Let
\begin{equation}\label{e4}
f^*_d\, :\, \text{Br}'(\text{Sym}^{d+1}(X))\,\longrightarrow\,
\text{Br}'(\text{Sym}^{d}(X))
\end{equation}
be the pullback homomorphism for $f_d$ in \eqref{e3}.

\begin{lemma}\label{lem1}
For every $d\, \geq\, 2$, the homomorphism $f^*_d$ in \eqref{e4} is
an isomorphism.
\end{lemma}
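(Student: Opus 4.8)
The plan is to compute both Brauer groups via Schröer's exact sequence (Proposition \ref{prop1}) and to verify that $f_d$ induces an isomorphism on each term. First I would record that the integral cohomology of a symmetric product is torsion-free: since $q^*_d$ is injective (Theorem \ref{t:macdonald}) and $H^*(X^d,\ZZ)$ is torsion-free (Künneth, because $H^*(X,\ZZ)$ is), the ring $H^*(\text{Sym}^d(X),\ZZ)$ embeds into a torsion-free group and hence is itself torsion-free. In particular $H^3(\text{Sym}^d(X),\ZZ)_{\rm tor}=0$ for every $d$, so the sequence of Proposition \ref{prop1} degenerates to a natural isomorphism $\text{Br}'(\text{Sym}^d(X))\cong A\otimes_{\ZZ}(\QQ/\ZZ)$, with $A=H^2(\text{Sym}^d(X),\ZZ)/\text{NS}(\text{Sym}^d(X))$. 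Thus it suffices to prove that $f_d$ induces an isomorphism on the groups $A$, and then tensor with $\QQ/\ZZ$.

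Second, I would pin down $f^*_d$ on $H^2(\cdot,\ZZ)$ by passing to the Cartesian powers. Let $j\,:\,X^d\hookrightarrow X^{d+1}$ be the inclusion $(x_1,\dots,x_d)\mapsto(x_1,\dots,x_d,x_0)$, so that $q_{d+1}\circ j=f_d\circ q_d$ with $q_d$ as in \eqref{q}. Since the last coordinate of $j$ is constant, $j^*\lambda_i^{d+1}=0$ and $j^*\eta^{d+1}=0$, while $j^*\lambda_i^k=\lambda_i^k$ and $j^*\eta^k=\eta^k$ for $k\le d$; hence $j^*$ carries $\sum_{k=1}^{d+1}\lambda_i^k$ to $\sum_{k=1}^{d}\lambda_i^k$ and $\sum_{k=1}^{d+1}\eta^k$ to $\sum_{k=1}^{d}\eta^k$. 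Combined with Theorem \ref{t:macdonald} and the injectivity of $q^*_d$, this yields $f^*_d\lambda_i=\lambda_i$ and $f^*_d\eta=\eta$. As $f^*_d$ is a ring homomorphism and, by Theorem \ref{t:macdonald}, $H^2(\text{Sym}^d(X),\ZZ)$ is spanned by $\eta$ and the products $\lambda_i\cup\lambda_j$, it follows at once that $f^*_d$ is surjective on $H^2(\cdot,\ZZ)$.

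Third, I would promote surjectivity to bijectivity by a rank count. Macdonald's computation of the Betti numbers (see \cite{Ma}) gives $b_2(\text{Sym}^d(X))=1+\binom{2g}{2}$ for every $d\ge 2$; this is precisely where the hypothesis $d\ge 2$ enters, since for $d=1$ one has $\text{Sym}^1(X)=X$ and extra relations collapse this rank. Hence for $d\ge 2$ both $H^2(\text{Sym}^{d+1}(X),\ZZ)$ and $H^2(\text{Sym}^d(X),\ZZ)$ are free of the same finite rank, and a surjection between free abelian groups of equal finite rank is an isomorphism; so $f^*_d$ is an isomorphism on $H^2(\cdot,\ZZ)$.

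Finally, since $f_d$ is a morphism of smooth complex projective varieties, $f^*_d$ is a morphism of integral Hodge structures of weight $2$; being bijective, it identifies the Hodge types, hence identifies $H^{1,1}(\cdot)\cap H^2(\cdot,\ZZ)=\text{NS}$ on the two sides, and therefore induces an isomorphism on the quotients $A$. Tensoring with $\QQ/\ZZ$ and invoking the naturality of the sequence in Proposition \ref{prop1} shows that $f^*_d$ is an isomorphism on $\text{Br}'$. I expect the main obstacle to lie in the second and third steps—computing $f^*_d$ on the generators and certifying, via the Betti numbers, that the spanning classes form a basis exactly when $d\ge 2$—while the torsion-freeness extracted from Theorem \ref{t:macdonald}, which kills the $H^3$-term of Proposition \ref{prop1}, is the enabling structural input.
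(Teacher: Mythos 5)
Your proposal is correct and follows essentially the same route as the paper: kill the $H^3$-torsion term of Proposition \ref{prop1} via Macdonald's theorem, identify $H^2(\mathrm{Sym}^d(X),\ZZ)$ with $\bigl(\bigwedge^2 H^1(X,\ZZ)\bigr)\oplus H^2(X,\ZZ)$, check that $f^*_d$ acts as the identity there (compatibly with Hodge types, hence on $\mathrm{NS}$), and pass to the quotient tensored with $\QQ/\ZZ$. Your computation through the commuting square $q_{d+1}\circ j=f_d\circ q_d$ merely fills in the step the paper calls ``evident.''
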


\begin{proof}
For every positive integer $d$, the cohomology group $H^*(\text{Sym}^d(X),\,
{\mathbb Z})$ is torsionfree by Theorem \ref{t:macdonald}. 
Therefore, from Proposition \ref{prop1} we conclude that
\begin{equation}\label{e5}
{\rm Br}'(\text{Sym}^d(X))\,\cong\,(H^2(\text{Sym}^d(X),\, {\mathbb Z})/
\text{NS}(\text{Sym}^d(X)))\otimes_{\mathbb Z} ({\mathbb Q}/{\mathbb Z})\, .
\end{equation}

{}From Theorem \ref{t:macdonald},
\begin{equation}\label{phi3}
H^2(\text{Sym}^d(X),\, {\mathbb Z})\,=\, (\bigwedge\nolimits^2 H^1(X,\,
{\mathbb Z}))\oplus H^2(X,\, {\mathbb Z})\, .
\end{equation}
Let
$$
f'_d\, :\, H^2(\text{Sym}^{d+1}(X),\,{\mathbb Z})\,\longrightarrow\,
H^2(\text{Sym}^{d}(X),\,{\mathbb Z})
$$
be the homomorphism that sends a cohomology class to its
pullback by the map $f_d$ in \eqref{e3}. It is evident that in terms of
the isomorphism in \eqref{phi3}, this homomorphism $f'_d$ coincides
with the identity map of $(\bigwedge^2 H^1(X,\, {\mathbb Z}))\oplus
H^2(X,\, {\mathbb Z})$.

The isomorphism in \eqref{phi3} is
clearly compatible with the Hodge decompositions. Since $f'_d$ coincides
with the identity map of $(\bigwedge^2 H^1(X,\, {\mathbb Z}))\oplus
H^2(X,\, {\mathbb Z})$, we now conclude that $f'_d$ takes $\text{NS}(
\text{Sym}^{d+1}(X))$ isomorphically to $\text{NS}(
\text{Sym}^{d}(X))$. Therefore, the lemma follows from \eqref{e5}.
\end{proof}

For any positive integer $d$, let
\begin{equation}\label{xi0}
\xi_d\, :\, \text{Sym}^d(X)\,\longrightarrow\, \text{Pic}^d(X)
\end{equation}
be the morphism defined by $\sum_{x\in X}n_x\cdot x\,\longmapsto\,
{\mathcal O}_X(\sum_{x\in X}n_x\cdot x)$. Let
\begin{equation}\label{xi}
\xi^*_d\, :\, \text{Br}'(\text{Pic}^d(X))\,\longrightarrow\,
\text{Br}'(\text{Sym}^d(X))
\end{equation}
be the pullback homomorphism corresponding to $\xi_d$.

\begin{lemma}\label{lem2}
For any $d\, \geq\, 2$, the homomorphism $\xi^*_d$ in \eqref{xi} is an
isomorphism.
\end{lemma}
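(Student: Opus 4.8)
The plan is to run exactly the same strategy that worked for Lemma \ref{lem1}, now comparing $\sym^d(X)$ with $\text{Pic}^d(X)$ via $\xi_d$. The essential structural fact we exploit is that, for $d \geq 2$, the map $\xi_d$ is a projective bundle (a $\mathbb{P}^{d-g}$--bundle when $d \geq 2g-1$, and more delicately a map whose fibres are projective spaces over the image of the Abel--Jacobi map in general), so that $\sym^d(X)$ is cohomologically very close to $\text{Pic}^d(X)$ in low degrees. Since $H^*(\sym^d(X),\mathbb{Z})$ is torsionfree by Theorem \ref{t:macdonald}, we again have $H^3(\sym^d(X),\mathbb{Z})_{\rm tor} = 0$, and the torsionfreeness of $H^*(\text{Pic}^d(X),\mathbb{Z})$ (it is a complex torus, so its cohomology is an exterior algebra on $H^1$) gives $H^3(\text{Pic}^d(X),\mathbb{Z})_{\rm tor} = 0$ as well. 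Hence Proposition \ref{prop1} reduces the claim, on both sides, to the Hodge-theoretic statement
\[
{\rm Br}'(M) = \bigl(H^2(M,\mathbb{Z})/\text{NS}(M)\bigr) \otimes_{\mathbb{Z}} (\mathbb{Q}/\mathbb{Z}),
\]
so it suffices to show that $\xi_d^*$ identifies $H^2(\text{Pic}^d(X),\mathbb{Z})/\text{NS}$ with $H^2(\sym^d(X),\mathbb{Z})/\text{NS}$.

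First I would compute $\xi_d^*$ on $H^2$. For a torus, $H^2(\text{Pic}^d(X),\mathbb{Z}) = \bigwedge^2 H^1(\text{Pic}^d(X),\mathbb{Z}) = \bigwedge^2 H^1(X,\mathbb{Z})$, the identification with the curve coming from the Abel--Jacobi theory. Pulling back along $\xi_d$ and comparing with the decomposition \eqref{phi3}, namely
\[
H^2(\sym^d(X),\mathbb{Z}) = \Bigl(\bigwedge\nolimits^2 H^1(X,\mathbb{Z})\Bigr) \oplus H^2(X,\mathbb{Z}),
\]
I expect $\xi_d^*$ to be the inclusion of the first summand: it carries $\bigwedge^2 H^1(X,\mathbb{Z})$ isomorphically onto the corresponding $\bigwedge^2$--factor of $\sym^d(X)$, while the class $\eta$ generating the $H^2(X,\mathbb{Z})$--summand is precisely the fibre class of the projective bundle $\xi_d$ and hence is \emph{not} in the image of $\xi_d^*$. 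Thus $\xi_d^*$ on $H^2$ is an injection whose cokernel is the rank-one group generated by $\eta$. This is the main computational point, and it is where Proposition \ref{p:classes} and the explicit class \eqref{D} of the universal divisor do the work, since $\eta$ is realized as $\iota_x^*[D^{\rm univ}]$ and the $\lambda_i$ arise as slant products of $[D^{\rm univ}]$.

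The hard part will be showing that passing to the quotient by the Néron--Severi subgroups exactly kills this discrepancy, i.e. that the extra generator $\eta$ becomes algebraic on $\sym^d(X)$ while the matching of the $\bigwedge^2$--parts is Hodge-compatible. Concretely, I would argue that the isomorphism of $\bigwedge^2 H^1(X,\mathbb{Z})$--summands induced by $\xi_d^*$ respects the Hodge decompositions (both come from the Hodge structure on $H^1(X,\mathbb{Z})$ via Abel--Jacobi), so it takes $\text{NS}(\text{Pic}^d(X))$ onto the $\bigwedge^2$--part of $\text{NS}(\sym^d(X))$; then I would observe that $\eta$, being the class of an ample effective divisor (a fibre-point section of the universal divisor, of Hodge type $(1,1)$), lies in $\text{NS}(\sym^d(X))$. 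Therefore the cokernel of $\xi_d^*$ on $H^2(\text{--},\mathbb{Z})$ is contained in $\text{NS}(\sym^d(X))$, which forces $\xi_d^*$ to be an isomorphism after quotienting by Néron--Severi and tensoring with $\mathbb{Q}/\mathbb{Z}$. Invoking \eqref{e5} on both sides then yields that $\xi_d^*$ in \eqref{xi} is an isomorphism, completing the proof. The condition $d \geq 2$ enters to guarantee that the fibres of $\xi_d$ are positive-dimensional projective spaces so that $\eta$ is genuinely the (nonzero) fibre class and the cohomology comparison above is valid.
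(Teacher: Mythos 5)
Your argument is correct in substance, but it takes a genuinely different route from the paper's. The paper first uses the commutative square relating $\xi_d$ and $\xi_{d+1}$ through $f_d$ and the translation $L\,\longmapsto\, L\otimes{\mathcal O}_X(x_0)$, together with Lemma \ref{lem1}, to reduce to $d$ sufficiently large; for $d\,>\,2g$ it identifies $\sym^d(X)$ with the projectivization ${\mathbb P}({\rm pr}_*{\mathcal L})$ of a vector bundle on $\mathrm{Pic}^d(X)$ and invokes Gabber's theorem that such projectivizations induce isomorphisms on $\mathrm{Br}'$. You instead argue uniformly for every $d\,\geq\,2$: both third cohomologies are torsionfree, so Proposition \ref{prop1} reduces the claim to comparing $H^2/\mathrm{NS}$; the pullback $\xi_d^*$ carries $H^2(\mathrm{Pic}^d(X),\,\ZZ)\,=\,\bigwedge^2 H^1(X,\,\ZZ)$ isomorphically onto the first summand in \eqref{phi3} (one checks $q_d^*\xi_d^*(\alpha_i)\,=\,\sum_j\lambda_i^j\,=\,q_d^*(\lambda_i)$ and uses injectivity of $q_d^*$ from Theorem \ref{t:macdonald}); the cokernel is generated by $\eta$, which is the class of the effective divisor $x_0+\sym^{d-1}(X)$ and hence lies in $\mathrm{NS}(\sym^d(X))$; and strictness of morphisms of Hodge structures gives $\mathrm{NS}(\sym^d(X))\,=\,\xi_d^*\mathrm{NS}(\mathrm{Pic}^d(X))\oplus\ZZ\cdot\eta$, whence the quotients agree and \eqref{e5} finishes the proof. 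Your route avoids both the stabilization step and Gabber's theorem; the paper's route buys the convenience of never having to analyze N\'eron--Severi groups outside the stable range, where the geometry is a genuine projective bundle.

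Two framing claims in your write-up are false and should be excised, even though the computation you actually carry out never uses them. First, $\xi_d$ is \emph{not} a projective bundle for general $d\,\geq\,2$: for $d\,<\,2g-1$ the fibres jump in dimension, and for $d\,<\,g$ the map is not even surjective. Second, the hypothesis $d\,\geq\,2$ is not there to make the fibres positive-dimensional projective spaces --- for $d\,\leq\,g$ the generic fibre is a single point or empty. Its actual role is to guarantee the decomposition \eqref{phi3}, i.e., that Macdonald's relations do not yet appear in degree $2$; for $d\,=\,1$ one has $\sym^1(X)\,=\,X$, so $\mathrm{Br}'(\sym^1(X))\,=\,0$ while $\mathrm{Br}'(\mathrm{Pic}^1(X))$ is in general nonzero, and the statement fails.
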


\begin{proof}
Let $$\eta_d\, :\, \text{Pic}^d(X)\,\longrightarrow\, \text{Pic}^{d+1}(X)$$
be the isomorphism defined by $L\,\longmapsto\, L\otimes{\mathcal O}_X(x_0)$.
We have the commutative diagram
$$
\begin{matrix}
\text{Sym}^d(X) & \stackrel{f_d}{\longrightarrow} & \text{Sym}^{d+1}(X)\\
~\, \Big\downarrow\xi_d && \ ~\Big\downarrow\xi_{d+1}\\
\text{Pic}^d(X)& \stackrel{\eta_d}{\longrightarrow} & \text{Pic}^{d+1}(X)
\end{matrix}
$$
where $f_d$ and $\xi_d$ are constructed in \eqref{e3} and \eqref{xi0} respectively,
and $\eta_d$ is defined above. Let
\begin{equation}\label{B}
\begin{matrix}
\text{Br}'(\text{Pic}^{d+1}(X)) &\stackrel{\eta^*_d}{\longrightarrow} &
\text{Br}'(\text{Pic}^{d}(X))\\
~\, \Big\downarrow\xi^*_{d+1} && \ ~\Big\downarrow\xi^*_d\\
\text{Br}'(\text{Sym}^{d+1}(X)) & \stackrel{f^*_d}{\longrightarrow} &
\text{Br}'(\text{Sym}^{d}(X))
\end{matrix}
\end{equation}
be the corresponding commutative
diagram of homomorphisms of cohomological Brauer groups. From
Lemma \ref{lem1} we know that $f^*_d$ is an isomorphism
for $d\, \geq\, 2$. The homomorphism $\eta^*_d$
is an isomorphism because the map $\eta_d$ is an isomorphism.
Therefore, from the commutativity of \eqref{B} we conclude that the homomorphism
$\xi^*_d$ is an isomorphism if $\xi^*_{d+1}$ is an isomorphism. Consequently, it
suffices to prove the lemma for all $d$ sufficiently large. 

As before, the genus of $X$ is denoted by $g$.
Take any $d\, > \, 2g$. Note that for any line bundle $L$
on $X$ of degree $d$, using Serre duality we have
\begin{equation}\label{f0}
H^1(X,\, L)\,=\, H^0(X,\, K_X\otimes L^\vee)^\vee \,=\, 0
\end{equation}
because $\text{degree}(K_X\otimes L^\vee)\,=\, 2g-2-d \, <\, 0$.

Take a Poincar\'e line bundle ${\mathcal L}\, \longrightarrow\,
X\times \text{Pic}^{d}(X)$. From \eqref{f0} it follows that the direct image
$$
{\rm pr}_*{\mathcal L}\, \longrightarrow\, \text{Pic}^{d}(X)
$$
is locally free of rank $d-g+1$, where
${\rm pr}$ is the natural projection of $X\times \text{Pic}^{d}(X)$
to $\text{Pic}^{d}(X)$. The projective bundle ${\mathbb P}({\rm pr}_*{\mathcal L})$,
that parametrizes the lines in the fibers of the
holomorphic vector bundle ${\rm pr}_*{\mathcal L}$,
is independent of the choice of the Poincar\'e line bundle $\mathcal L$. Indeed,
this follows from the fact that any two choices of the Poincar\'e line bundle
over $X\times \text{Pic}^{d}(X)$ differ
by tensoring with a line bundle pulled back from $\text{Pic}^{d}(X)$
\cite[p. 166]{ACGH}. The total space of ${\mathbb P}({\rm pr}_*{\mathcal L})$ is
identified with $\text{Sym}^{d}(X)$ by sending a section to the divisor
on $X$ given by the section; see \cite{Schwarz}.
This identification between $\text{Sym}^{d}(X)$ and ${\mathbb P}({\rm pr}_*{\mathcal L})$
takes the map $\xi_d$ in \eqref{xi0} to the
natural projection of ${\mathbb P}({\rm pr}_*{\mathcal L})$ to $\text{Pic}^{d}(X)$.

Since ${\mathbb P}({\rm pr}_*{\mathcal L})$ is the projectivization of a vector bundle, the
natural projection
$${\mathbb P}({\rm pr}_*{\mathcal L})\,\longrightarrow\,\text{Pic}^{d}(X)$$
induces an isomorphism of cohomological Bauer groups \cite[p. 193]{Ga}. Consequently,
the homomorphism
$$
\xi^*_d\, :\, \text{Br}'(\text{Pic}^{d}(X))\,\longrightarrow\,
\text{Br}'(\text{Sym}^d(X))
$$
defined in \eqref{xi} is an isomorphism if $d\, > \, 2g$.
We noted earlier that it is enough to prove the lemma for all $d$
sufficiently large. Therefore, the proof of the lemma is now complete.
\end{proof}

\section{The cohomology of the Quot scheme}

For integers $r\, \geq\, 1$ and $d$, denote by
$\sQ(r,d)$ the Quot scheme parametrizing all coherent subsheaves
\[
 \sF\,\hookrightarrow\, {\mathcal O}_X^{\oplus r}
\]
where $\sF$ is of rank $r$ and degree $-d$. Note that there are no such subsheaf
if $d\, <\, 0$. If $d\,=\, 0$, then $\sF\,=\, {\mathcal O}_X^{\oplus r}$.
If $d\,=\, 1$, then $\sQ(r,d)\,=\, X\times {\mathbb C}{\mathbb P}^{r-1}$.
We assume that $d\, \geq\, 1$.

We will now recall from \cite{bifet}
a few facts about the Bia{\l}ynicki-Birula decomposition of $\sQ(r,d)$.
Using the natural action of ${\mathbb G}_m\,=\, {\mathbb C}\setminus\{0\}$
on ${\mathcal O}_X$, we get an action of ${\mathbb G}_m^r$ on
${\mathcal O}_X^{\oplus r}$. This action produces an action of
${\mathbb G}_m^r$ on ${\mathcal Q}(r,d)$. The fixed points of this torus action
correspond to subsheaves of ${\mathcal O}_X^{\oplus r}$
that decompose into compatible direct sums
\[
\bigoplus_{i=1}^r \sL_i\,\hookrightarrow \, {\mathcal O}_X^{\oplus r}\, ,
\]
where $\sL_i\,\hookrightarrow\, {\mathcal O}_X$ is a subsheaf of rank one.
Let $D_i$ be the effective divisor given by the inclusion of $\sL_i$ in
${\mathcal O}_X$. In particular, we have $\sL_i \,=\, {\mathcal O}_X( - D_i)$.

We use the convention that $\text{Sym}^0(X)$ is a single point. Using this notation,
we have
$$
(D_1\, ,\cdots\, ,D_r)\, \in\, {\rm Sym}^{m_1}(X)\times\cdots \times{\rm Sym}^{m_r}(X)\, ,
$$
where $m_i\,=\, \text{degree}(D_i)$. Conversely, if $(D'_1\, ,\cdots\, ,D'_r)\,
\in\, {\rm Sym}^{m_1}(X)\times\cdots \times{\rm Sym}^{m_r}(X)$, then the
point of ${\mathcal Q}(r,d)$ representing the subsheaf
$$
\bigoplus_{i=1}^r {\mathcal O}_X(-D'_i)\, \subset\, {\mathcal O}_X^{\oplus r}
$$
is fixed by the above action of ${\mathbb G}_m^r$ on ${\mathcal Q}(r,d)$.

For $k\, \geq\, 1$,
denote by ${\bf Part}_r^{k}$ the set of partitions of $k$ of length $r$. So
$${\bf m} = (m_1\, ,\cdots \, ,m_r)\,\in \,{\bf Part}_r^k$$ if and only if
$m_j$ are nonnegative integers with
\[
 \sum_{j=1}^r m_j\,=\, k\, .
\]
For ${\bf m}\,\in\, {\bf Part}_r^{d}$, define 
\begin{equation}\label{dm}
d_{\bf m} \,:=\, \sum_{i=1}^r (i-1)m_i\, .
\end{equation}

The connected components of the fixed point locus for the above action of 
${\mathbb G}_m^r$ on $\sQ(r,d)$ are in bijection with the elements of
${\bf Part}_r^{d}$. The
component corresponding to the partition ${\bf m} \,=\, (m_1\, ,\cdots \, ,m_r)$
is the product
\[
\sym^{\bf m}(X)\, :=\,{\rm Sym}^{m_1}(X)\times\cdots \times{\rm Sym}^{m_r}(X)\, .
\]
It is possible, see \cite[page 3]{bifet}, 
 to choose a $1$--parameter subgroup ${\mathbb G}_m\,\longrightarrow\,
{\mathbb G}_m^r$ given by
$z\mapsto (z^{\lambda_1}, z^{\lambda_2},\ldots , z^{\lambda_r})$ so that
the following two hold:
\begin{enumerate}
\item The fixed point locus under the induced action of ${\mathbb G}_m$ is the same
as the fixed point locus under the action of ${\mathbb G}_m^r$. 
\item The integers $\lambda_1< \lambda_2 < \ldots < \lambda_r$ are increasing.
\end{enumerate}

 For
this action of ${\mathbb G}_m$ on $\sQ(r,d)$ define
\[
 \sym^{\bf m}(X)^+ \,:=\, 
 \{x \,\in\, \sQ(r,d) \, \mid\, \lim_{t\to 0}\, t.x \,\in\, \sym^{\bf m}(X) \}\, ,
\]
where ${\bf m}\,\in \,{\bf Part}_r^k$. This stratification of $\sQ(r,d)$
gives us a decomposition of the Poincar\'e polynomial of $\sQ(r,d)$.
Further, the morphism
\begin{equation} \label{eq:fib}
 \sym^{\bf m}(X)^+ \,\longrightarrow\, \sym^{\bf m}(X)
\end{equation}
that sends a point to its limit is a fiber bundle with 
fiber ${\mathbb A}^{d_{\bf m}}$ (see \cite{bb} and \cite{bifet}), where
$d_{\bf m}$ is defined in \eqref{dm}.

This gives
\begin{equation} \label{eq:dim}
 \dim \sym^{\bf m}(X)^+ \,= \,\dim \sym^{\bf m}(X) + d_{\bf m}
\,= \, d + d_{\bf m}
\end{equation}
(see \cite{bifet}).
 
 \begin{theorem}\label{thm1}
For $i\, \geq\, 1$,
\[
H^{i}(\sQ(r,d),\,\ZZ)\,\cong \,\bigoplus_{\substack{{\bf m}\in
{\bf Part}_r^{d} \\ j+2d_{\bf{m}} = i} }
H^{j}({\rm Sym}^{m_1}(X)\times\cdots \times {\rm Sym}^{m_r}(X),\, \ZZ)\, .
\]
\end{theorem}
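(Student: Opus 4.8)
The plan is to deduce the stated decomposition from the Bia\l ynicki--Birula stratification recalled above, together with the torsion-freeness supplied by Theorem \ref{t:macdonald}. First I would record that the attracting cells $\sym^{\bf m}(X)^+$ form a \emph{filtrable} decomposition of $\sQ(r,d)$: by the Bia\l ynicki--Birula theorem \cite{bb}, applied to the one-parameter subgroup chosen before \eqref{eq:fib}, the closure of each cell $\sym^{\bf m}(X)^+$ is contained in the union of the cells $\sym^{\bf m'}(X)^+$ with $d_{\bf m'}\,\le\, d_{\bf m}$, where $d_{\bf m}$ is defined in \eqref{dm}. Hence $Z_k\,:=\,\bigcup_{d_{\bf m}\le k}\sym^{\bf m}(X)^+$ is a closed subvariety, and the chain $Z_0\subset Z_1\subset\cdots\subset\sQ(r,d)$ realizes the decomposition with $Z_k\setminus Z_{k-1}\,=\,\bigsqcup_{d_{\bf m}=k}\sym^{\bf m}(X)^+$.

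Next I would pass to compactly supported cohomology, which agrees with ordinary cohomology on the smooth projective $\sQ(r,d)$ and on each $\sym^{\bf m}(X)$. The filtration by the $Z_k$ yields a spectral sequence
\[
E_1\,=\,\bigoplus_{{\bf m}\in{\bf Part}_r^d} H_c^{*}\big(\sym^{\bf m}(X)^+\big)\,\Longrightarrow\, H_c^{*}(\sQ(r,d))\,=\,H^{*}(\sQ(r,d),\,\ZZ)\, .
\]
Because the morphism \eqref{eq:fib} exhibits $\sym^{\bf m}(X)^+$ as an affine bundle of rank $d_{\bf m}$ over $\sym^{\bf m}(X)$, the Thom isomorphism identifies $H_c^{j+2d_{\bf m}}(\sym^{\bf m}(X)^+)$ with $H^{j}(\sym^{\bf m}(X),\,\ZZ)$, producing exactly the shift by $2d_{\bf m}$ in the assertion. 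Thus the right-hand side of the theorem is precisely the $E_1$-page, and it remains to prove degeneration together with an integral splitting.

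The degeneration is where the real work lies, and it combines two inputs. The first is Theorem \ref{t:macdonald}: since $H^{*}(\sym^{m_i}(X),\,\ZZ)$ is torsion-free for every factor, the K\"unneth formula makes each $H^{*}(\sym^{\bf m}(X),\,\ZZ)$, and hence the whole $E_1$-page, torsion-free. The second is the perfectness of the stratification at the level of Betti numbers: the Poincar\'e polynomial identity $P_t(\sQ(r,d))\,=\,\sum_{\bf m} t^{2d_{\bf m}}\,P_t(\sym^{\bf m}(X))$ recorded in \cite{bifet} (via \eqref{eq:dim}) shows that $E_1\otimes\QQ$ and $H^{*}(\sQ(r,d),\,\QQ)$ have equal total dimension. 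Consequently every differential of the spectral sequence vanishes after tensoring with $\QQ$; as all differentials are maps between torsion-free groups, they vanish integrally, so $E_\infty\,=\,E_1$ over $\ZZ$. Finally, the induced filtration on $H^{i}(\sQ(r,d),\,\ZZ)$ has torsion-free successive quotients, hence splits, yielding the claimed isomorphism $H^{i}(\sQ(r,d),\,\ZZ)\,\cong\,\bigoplus_{j+2d_{\bf m}=i}H^{j}(\sym^{\bf m}(X),\,\ZZ)$.

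I would flag the main obstacle as the passage from rational to integral degeneration: the usual parity shortcut is unavailable because the symmetric products carry odd cohomology coming from $H^1(X,\,\ZZ)$, so one genuinely needs the interplay between the numerical perfectness of \cite{bifet} and the torsion-freeness of Theorem \ref{t:macdonald} both to rule out integral differentials and to split the resulting filtration. A secondary point requiring care is the verification that the chosen one-parameter subgroup makes the decomposition filtrable with the stated closure order $d_{\bf m'}\le d_{\bf m}$, so that the graded pieces of the filtration are exactly the individual cells and the spectral sequence above is the correct one.
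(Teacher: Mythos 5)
Your argument is essentially correct and is, in substance, the argument behind the references the paper cites for this statement (the paper's own ``proof'' consists only of the citation to \cite{bifet} and \cite[p.~649, Remark]{bifet:94}): a filtrable Bia{\l}ynicki--Birula decomposition, the compactly supported spectral sequence whose $E_1$ page is the Thom-shifted cohomology of the fixed components via \eqref{eq:fib}, degeneration forced by comparing total ranks against the torsion-freeness supplied by Theorem \ref{t:macdonald}, and splitting of the resulting filtration because the graded pieces are free. Two small points to tighten: in the convention where the cell with $d_{\bf m}=0$ is the dense one, the partial unions $\bigcup_{d_{\bf m}\le k}\sym^{\bf m}(X)^+$ are open rather than closed (this does not affect the $E_1$ page, only the indexing of the filtration), and invoking the Poincar\'e polynomial identity from \cite{bifet} to get rational degeneration is slightly circular in spirit --- it is cleaner to derive it from purity of $H_c^*$ of an affine bundle over a smooth projective base, or from \cite{cs}, which is what the paper itself uses for the integral splitting in the proof of Proposition \ref{prop3}.
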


\begin{proof}
 See \cite{bifet} and \cite[p. 649, Remark]{bifet:94}.
\end{proof}

We will construct some cohomology classes in $H^2(\sQ(r,d),\, \ZZ)$.
There is a universal
vector bundle ${\mathcal F}^{\univ}$ on ${\mathcal Q}(r,d)\times X$.
Fix a point $x_0\, \in\, X$. Let
$$
i_x\, :\, \sQ(r,d)\,\longrightarrow\, \sQ(r,d)\times X
$$
be the embedding defined by $z\, \longmapsto\, (z\, ,x)$.

Let
\begin{equation}\label{x}
c\, :=\, i^*_x c_1({\mathcal F}^{\univ}) \, \in\, H^2({\mathcal Q}(r,d),\,
{\mathbb Z})
\end{equation}
be the pullback. This cohomology class $c$ is clearly independent of $x$.

We can produce cohomology classes
\[
\overline{\alpha}_1\, , \overline{\alpha}_2\, ,\cdots\, ,
\overline{\alpha}_{2g} \,\in\, H^{1}(\sQ(r,d),\,\ZZ)
\]
by taking the slant product of $c_1({\mathcal F}^\univ)$ with the elements of
a basis $\{\alpha_1\, ,\cdots\, , \alpha_{2g}\}$ for
$H^{1}(X,\,\ZZ)$. Finally, there is a cohomology class $\gamma_2\in H^2(\sQ(r,d),\, \ZZ)$ obtained by
taking the slant product of $c_2({\mathcal F}^\univ)$ with the fundamental class of $X$. 

\begin{remark}
{\rm We will see in the next proposition that the cohomology of $\sQ(r,d)$ has
no torsion. The class $c_2({\mathcal F}^\univ)$ is a $(p,p)$-class and
so is the fundamental class of $X$. It follows that the class $\gamma_2$ is
in the N\'eron--Severi subgroup of $\sQ(r,d)$ as the slant product of two $(p,p)$
classes is in fact $(p,p)$.}
\end{remark}

\begin{proposition}\label{prop3}
Suppose that $d\,\geq \,2$. Then the classes 
\[
 c\, ,\ \gamma_2\, ,\ \overline{\alpha}_i\cup \overline{\alpha}_j\, ,\quad
1\,\le\, i\,<\,j\,\le\, 2g\, ,
\]
that generate $H^2(\sQ(r,d),\, \ZZ)$. In fact, 
$H^2(\sQ(r,d),\, \ZZ)$ is torsionfree, and these classes form a basis of
the $\ZZ$--module $H^2(\sQ(r,d),\, \ZZ)$.
\end{proposition}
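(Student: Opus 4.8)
The plan is to reduce everything to the Bia{\l}ynicki--Birula description furnished by Theorem \ref{thm1}, and then to identify the listed classes with explicit generators of the resulting summands. First I would read off $H^2(\sQ(r,d),\,\ZZ)$ from Theorem \ref{thm1} by collecting the pairs $(j,{\bf m})$ with $j+2d_{\bf m}=2$. Since $d_{\bf m}\,\geq\,0$ (see \eqref{dm}), only two partitions contribute: ${\bf m}=(d,0,\cdots,0)$, with $d_{\bf m}=0$ and $j=2$, contributing $H^2(\sym^d(X),\,\ZZ)$; and ${\bf m}=(d-1,1,0,\cdots,0)$, with $d_{\bf m}=1$ and $j=0$, contributing $H^0(\sym^{d-1}(X)\times X,\,\ZZ)=\ZZ$ (this partition requires $r,d\,\geq\,2$). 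Each summand is torsionfree by Theorem \ref{t:macdonald} together with the K\"unneth theorem (the factors having torsionfree cohomology), so $H^2(\sQ(r,d),\,\ZZ)$ is torsionfree; and using \eqref{phi3} its rank is $\binom{2g}{2}+1+1\,=\,\binom{2g}{2}+2$, which is exactly the number of classes in the proposed list. Thus it suffices to show these classes form an integral basis.

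The second step is to recognize every listed class except $\gamma_2$ as a pullback along $\varphi$. The key identity is
\[
c_1(\sF^{\univ})\,=\,-(\varphi\times\mathrm{id}_X)^*[D^{\univ}]\, ,
\]
which follows because, for each $Q$, the relation $\varphi'\,=\,\xi_d\circ\varphi$ gives $\det\sF(Q)\,\cong\,\sO_X(-\varphi(Q))$, so $\det\sF^{\univ}$ is the inverse of the line bundle associated with the pullback of the universal divisor. Restricting along $i_x$ and invoking Proposition \ref{p:classes}(1) then yields $c\,=\,-\varphi^*\eta$ (see \eqref{x}), while taking slant products and invoking Proposition \ref{p:classes}(2) yields $\overline{\alpha}_i\,=\,-\varphi^*\lambda_i$ (with the indexing of Proposition \ref{p:classes}), hence $\overline{\alpha}_i\cup\overline{\alpha}_j\,=\,\varphi^*(\lambda_i\cup\lambda_j)$. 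Now the fixed component of $\sQ(r,d)$ attached to ${\bf m}=(d,0,\cdots,0)$ is a copy $\sigma\,:\,\sym^d(X)\hookrightarrow\sQ(r,d)$ on which $\varphi\circ\sigma$ is the identity; therefore $\varphi^*$ is split injective, and it carries the basis $\{\lambda_i\cup\lambda_j,\,\eta\}$ of $H^2(\sym^d(X),\,\ZZ)$ coming from \eqref{phi3} to an integral basis $\{\overline{\alpha}_i\cup\overline{\alpha}_j,\,c\}$ of the direct summand $\mathrm{im}(\varphi^*)$.

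It remains to show that $\gamma_2$ completes this to an integral basis, i.e. that its image in the complementary summand $\ker(\sigma^*)\,\cong\,\ZZ$ is a generator; the plan is to exhibit a test surface detecting it. Fix a reduced effective divisor $x_2+\cdots+x_d$ and a further point $x_1$; letting the length--one quotient at $x_1$ vary along a line $S\,\cong\,{\mathbb C}{\mathbb P}^1\subset{\mathbb C}{\mathbb P}^{r-1}$ embeds $S$ in $\sQ(r,d)$ with $\varphi|_S$ constant. Then $\langle\varphi^*\beta,\,[S]\rangle\,=\,\langle\beta,\,\varphi_*[S]\rangle\,=\,0$ for every $\beta$, so $[S]$ annihilates $c$ and each $\overline{\alpha}_i\cup\overline{\alpha}_j$, whereas $\langle\gamma_2,\,[S]\rangle\,=\,\int_{S\times X}c_2(\sF^{\univ})$. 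Since the part of $\sF^{\univ}$ coming from the fixed divisor $x_2+\cdots+x_d$ is pulled back from $X$ and the support $S\times\{x_1\}$ has trivial normal bundle, the only surviving contribution is the degree of the tautological quotient line bundle of $S\subset{\mathbb C}{\mathbb P}^{r-1}$, giving $\langle\gamma_2,\,[S]\rangle\,=\,\pm1$. As the $\mathrm{im}(\varphi^*)$--part of $\gamma_2$ pairs to zero with $[S]$, this forces the $\ker(\sigma^*)$--component of $\gamma_2$ to be a generator, so $\{c,\,\gamma_2,\,\overline{\alpha}_i\cup\overline{\alpha}_j\}$ is an integral basis. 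The main obstacle is precisely this last Chern--class computation: one must check that the ``constant'' part of the universal quotient contributes nothing to $\int_{S\times X}c_2(\sF^{\univ})$ and that the remaining contribution is exactly $\pm1$ rather than merely nonzero, since only a unimodular pairing guarantees an integral, and not merely rational, basis.
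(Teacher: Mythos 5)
Your proposal follows essentially the same strategy as the paper's proof: read off the rank and torsion\-/freeness of $H^2(\sQ(r,d),\,\ZZ)$ from the Bia{\l}ynicki--Birula/Bifet decomposition (only the partitions $(d,0,\dots,0)$ and $(d-1,1,0,\dots,0)$ contribute), identify $c$ and the $\overline{\alpha}_i\cup\overline{\alpha}_j$ with the generators of $H^2(\sym^d(X),\,\ZZ)$ via the distinguished copy of $\sym^d(X)$ sitting inside $\sQ(r,d)$, and detect $\gamma_2$ by pairing against a ${\mathbb P}^1$ of length\-/one quotients concentrated at a single point --- this is exactly the curve $F({\mathbb P}^1)$ the paper constructs. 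Your replacement of the paper's Gysin sequence and the Carrell--Sommese splitting by the splitting $H^2(\sQ(r,d),\,\ZZ)=\mathrm{im}(\varphi^*)\oplus\ker(\sigma^*)$ coming from the section $\sigma$ of $\varphi$ is a clean and valid shortcut, and your final unimodularity argument (the $\ker(\sigma^*)$--component of $\gamma_2$ must be a generator because $\langle\gamma_2,[S]\rangle=\pm1$ while the $\mathrm{im}(\varphi^*)$--part pairs to zero) is correct.

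Two steps need tightening. First, the identity $c_1(\sF^{\univ})=-(\varphi\times\mathrm{id}_X)^*[D^{\univ}]$ does not follow from the fiberwise isomorphisms $\det\sF(Q)\cong\sO_X(-\varphi(Q))$ alone: by seesaw these determine $\det\sF^{\univ}$ only up to a twist by a line bundle pulled back from $\sQ(r,d)$, and such a twist would change $c$. (The identity is in fact true, since the canonical map $\det\sF^{\univ}\to\sO$ exhibits $\det\sF^{\univ}$ as the ideal sheaf of a relative divisor whose classifying map is $\varphi$; but this needs to be said.) The gap is avoidable: restrict to the section, where $(\sigma\times\mathrm{id}_X)^*\sF^{\univ}\cong\sO(-D^{\rm univ})\oplus\sO^{\oplus(r-1)}$ visibly --- this is what the paper does --- and compute the pairings of $c$ and $\overline{\alpha}_i\cup\overline{\alpha}_j$ with $[S]$ directly from $\sF^{\univ}|_{S\times X}$ rather than from ``$\varphi|_S$ is constant''. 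Second, the integral $\int_{S\times X}c_2(\sF^{\univ})$, which you rightly flag as the crux, is a short Chern character computation rather than an obstacle: the universal quotient restricted to $S\times X$ is $(\sO_S\boxtimes Q_0)\oplus(\sO_S(1)\boxtimes\sO_{x_1})$, with Chern character $d\,\omega_X+\omega_X\cup\omega_S$, whence $c_1(\sF^{\univ}|_{S\times X})=-d\,\omega_X$ and $c_2(\sF^{\univ}|_{S\times X})=\omega_X\cup\omega_S$; this gives $\langle\gamma_2,[S]\rangle=1$ exactly, and as a byproduct $\langle c,[S]\rangle=0$ and the vanishing of the slant products of $c_1$ with $H^1(X,\,\ZZ)$. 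With these two repairs your argument is complete and coincides in substance with the paper's.
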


\begin{proof}
Using Theorem \ref{thm1} and Theorem \ref{t:macdonald} it follows that $H^2(\sQ(r,d),\,
\ZZ)$ is torsionfree of rank 
\[
\binom{2g}{2} + 2\, .
\]
Hence it suffices to show the stated classes generate the second cohomology group.

The torus action on $\sQ(r,d)$ induces a Bia{\l}ynicki-Birula stratification on
this variety, as described above. Using \eqref{eq:dim}, the cell of largest dimension
in the Bia{\l}ynicki-Birula decomposition is the cell corresponding to the partition
\[
{ \bf m}_1 \,=\, (0\, ,0\, ,0\, ,\cdots\, , d)\, ,
\]
and the second largest cell corresponds to the partition
\[
 {\bf m}_2 \,=\, (0\, ,0\, ,\cdots\, , 0\, , 1\, , d-1)\, .
\]
It follows that $\sym^{{\bf m}_1}(X)^+$ is an open dense sub-scheme of $\sQ(r,d)$. 
Let $D \,:=\, \sQ(r,d) \setminus\sym^{{\bf m}_1}(X)^+$ be the complement. 
Using \eqref{eq:fib}, we have
\[
 H^2(\sym^{{\bf m}_1}(X)^+,\, \ZZ)\,=\, H^2(\sym^{{\bf m}_1}(X),\, \ZZ)\, .
\]
Further, by a dimension calculation (\ref{eq:dim}) and a Gysin sequence, 
\[
 H^0(D,\, \ZZ)\,\cong\, H^0(\sym^{{\bf m}_2}(X),\, \ZZ).
\]

Let
\[
\iota\, :\, \sym^{{\bf m}_1}(X) \,\hookrightarrow \,\sQ(r,d)
\]
be the inclusion map.

The Gysin sequence for the decomposition $\sQ(r,d) = \sym^{{\bf m}_1}(X)^+ \coprod D$ now
reads:
\[
\cdots \,\longrightarrow\, H^0(\sym^{{\bf m}_2}(X),\, \ZZ)\,
\stackrel{f_*}{\longrightarrow}\, H^2(\sQ(r,d),\, \ZZ)\,
\stackrel{\iota^*}{\longrightarrow}\,
H^2(\sym^{{\bf m}_1}(X),\, \ZZ)\,\longrightarrow\, \cdots\, ,
\]
where
\begin{equation}\label{fl}
f\, :\, \sym^{d-1}(X)\times X\,\longrightarrow\, \sQ(r,d)
\end{equation}
is the embedding. From \cite{cs}, this sequence splits, or in other
words, the Bia{\l}ynicki-Birula stratification is 
integrally perfect.

To complete the proof, it suffices to verify the following two statements:

\begin{enumerate}[(S1)]
 \item \label{st1} The classes $\iota^*(\overline{\alpha}_i\cup \overline{\alpha}_j)$,
$1\,\le\, i\,<\,j\,\le\, 2g$, and $\iota^*(c)$ generate
 $H^2(\sym^{{\bf m}_1}(X),\, \ZZ)$.
 \item \label{st2} The class $\gamma_2$ generates the image of $f_*$.
\end{enumerate}

For (S\ref{st1}), observe that
$$
\iota^*(\sF^{\rm univ})\,=\, j^*_z{\mathcal O}_{\sym^{d}(X)\times X}(
-D^{\rm univ})\oplus {\mathcal O}_{\sym^{d}(X)}^{r-1}\, ,
$$
where $j_z\, :\, \sym^{d}(X)\, \longrightarrow\, \sym^{d}(X)\times X$
is the embedding defined by $z\, \longmapsto\, (z\, ,x)$, and
$D^{\rm univ}$ is the universal divisor on $\sym^{d}(X)\times X$. From
Proposition \ref{p:classes} and Theorem \ref{t:macdonald} it
follows that the classes
\[
\iota^*(c)\, ,\ \iota^*(\overline{\alpha}_i\cup \overline{\alpha}_j)\, ,
\quad 1\le i<j\le 2g\, , 
\]
give a basis for $H^2(\sym^{{\bf m}_1}(X),\, \ZZ)$. Further, $\gamma_2
\,\in\, {\rm kernel}(\iota^*)$.

For (S\ref{st2}), we assume that $r\,=\,2$ for simplicity. The proof in the case
of higher rank is obtained by adding some trivial summands to the argument below.

As noted above, we have a split exact sequence :
\[
 0\,\longrightarrow\, H^0(\sym^{d-1}(X)\times X,\,\ZZ)\,
\stackrel{f_*}{\longrightarrow}\, H^2(\sQ(r,d),\, \ZZ)\,
 \stackrel{i^*}{\longrightarrow}\, H^2(\sym^d(X),\, \ZZ)\,\longrightarrow\, 0\, .
\]
Fix some quotient
 \[
 q\,:\,{\sO}_X \,\longrightarrow\, Q \,\longrightarrow\, 0
 \]
of degree $d-1$, and also fix some quotient
 \[
q'\,:\,\sO_X \,\longrightarrow\, \sO_p \,\longrightarrow\, 0
 \]
of degree $1$, where $p\,\in\, X$ is a point not in the support of $Q$.
 
This gives us a point $z\,\in\, \sym^{d-1}(X)\times X$. We can expand this
to a morphism
 \[
 F\, :\, {\mathbb A}^1 \longrightarrow (\sym^{d-1}(X)\times X)^+
 \]
by considering the family of quotients
 \[
 F(t) \,:=\, \left(
 \begin{array}{cc}
 qf& 0 \\ tq'& q'
 \end{array}
 \right)
\,:\, {\sO}_X^{\oplus 2} \,\longrightarrow\, Q \oplus \sO_p\,\longrightarrow\, 0\, .
\]

 Taking the closure of $F({\mathbb A}^1)$ in $\sQ(2,d)$ we obtain an inclusion
 \[
F\, :\, {\mathbb P}^1\,\hookrightarrow\, \sQ(2,d)\, .
 \]

As $\dim \sQ(2,d)=2d$, this give a cohomology class
 \[
 [{\mathbb P}^1] \,\in\, H^{4d-2}(\sQ(2,d),\,\ZZ)\, .
 \]
Let
$$
{\mathcal W}\,\longrightarrow\, {\mathbb P}^1\times X
$$
be the restriction of the universal vector bundle ${\mathcal F}^{\univ}
\,\longrightarrow\,\sQ(2,d)\times X$. It fits in the short exact sequence
\begin{equation}\label{shl}
0\,\longrightarrow\, {\mathcal W}\,\longrightarrow\,
{\mathcal O}^{\oplus 2}_{{\mathbb P}^1\times X}\,\longrightarrow\,
\widetilde{Q}\, :=\, ({\mathcal O}_{{\mathbb P}^1}\boxtimes Q)\oplus
{\mathcal O}_{{\mathbb P}^1}(1)\boxtimes {\mathcal O}_p)
\,\longrightarrow\, 0\, .
\end{equation}
Note that the Chern character
\begin{equation}\label{shl2}
Ch(\widetilde{Q})\,=\, d\omega_X +\omega_X\cup \omega_{{\mathbb P}^1}\, ,
\end{equation}
where $\omega_X$ and $\omega_{{\mathbb P}^1}$ are the fundamental classes of
$X$ and ${\mathbb P}^1$ respectively. 
In particular, $c_1(\widetilde{Q})\,=\, \omega_X$. Therefore, the
slant product of $c_1(\widetilde{Q})$ with elements of $H^1(X,\, {\mathbb Z})$
vanish. We have
$$
c_2(\widetilde{Q})\,=\, \omega_X\cup \omega_{{\mathbb P}^1}\, .
$$
Its slant product with $X$ is then just $\omega_{{\mathbb P}^1}$. Therefore,
\[
(F^* \gamma_2) \cup [{\mathbb P}^1] \,= \,\int_{{\mathbb P}^1} \gamma_2 \,= \,1\, .
\]
 So the cohomology classes described in the statement of the proposition
 give a basis for the vector space $H^2(\sQ(r,d), \, \QQ)$.

We will prove the following statements:
\begin{equation}\label{C1}
 (F^* c) \cup [{\mathbb P}^1] \,=\, 0\, .
\end{equation}
\begin{equation}\label{C2}
 \alpha_i \cup [{\mathbb P}^1] \,=\, 0\, .
\end{equation}
\begin{equation}\label{C3}
f_*([\sym^{d-1}(X)\times X])\cup [{\mathbb P}^1] \,=\, 1\, .
\end{equation}
\begin{equation}\label{C4}
 (F^*\gamma_2) \cup [{\mathbb P}^1] \,= \,\int_{{\mathbb P}^1} \gamma_2 \,=\, 1\, .
\end{equation}
The map $f$ is defined in \eqref{fl}.

We first show that the above statements complete the proof. For that it
is sufficient to observe that they imply both
 \[
 f_*([\sym^{d-1}(X)\times X])\quad\text{ and }\quad \gamma_2
 \]
are dual to $[{\mathbb P}^1]$ and hence must be equal.

To prove \eqref{C1}, consider \eqref{shl}.
Choose a point $x\,\in\, X$ away from the support of $Q\oplus \sO_p$
and restrict $\mathcal W$ to ${\mathbb P}^1\times \{x\}$. From \eqref{shl2}
it follows that the first Chern class of this restriction vanish. The
first Chern class of this restriction clearly coincides with
$(F^* c)\cup [{\mathbb P}^1]$.

The left--hand side of \eqref{C2} is clearly the slant product of
$c_1(\mathcal W)$ with $\alpha_i$. We noted above that this slant product vanishes.

Now, \eqref{C3} is clear from the construction of the morphism $F$ from
${\mathbb P}^1$. Finally \eqref{C4} has already been proved.
\end{proof}

\section{The cohomological Brauer group}

For integers $r\, \geq\, 1$ and $d\, \geq\, 0$, take any subsheaf
${\mathcal F}\,\subset\, {\mathcal O}^{\oplus r}_X$ lying in
${\mathcal Q}(r,d)$. Taking $r$-th exterior power, we get a subsheaf
$\bigwedge^r{\mathcal F}\,\subset\, \bigwedge^r{\mathcal O}^{\oplus r}_X
\,=\, {\mathcal O}_X$. Let
\begin{equation}\label{v1}
\varphi\, :\, {\mathcal Q}(r,d)\, \longrightarrow\, \text{Sym}^{d}(X)
\end{equation}
be the morphism that sends any subsheaf ${\mathcal F}\,\subset\,
{\mathcal O}^{\oplus r}_X$ to the scheme theoretic support of the above quotient
${\mathcal O}_X/\bigwedge^r{\mathcal F}$. Let
\begin{equation}\label{v}
\varphi^*\, :\, \text{Br}'(\text{Sym}^{d}(X))\, \longrightarrow\,
\text{Br}'({\mathcal Q}(r,d))
\end{equation}
be the pullback homomorphism using $\varphi$.

\begin{lemma}\label{lem3}
The homomorphism $\varphi^*$ in \eqref{v} is an isomorphism.
\end{lemma}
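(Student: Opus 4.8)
The plan is to reduce the statement about Brauer groups to a statement about the Hodge-theoretic quotient $H^2/\text{NS}$, exactly as in the passage leading to \eqref{e5}. First I would record that $H^3(\sQ(r,d),\,\ZZ)$ is torsionfree: by Theorem \ref{thm1} it is a direct sum of cohomology groups of products of symmetric powers of $X$, each torsionfree by Theorem \ref{t:macdonald} and the K\"unneth formula. Hence $H^3(\sQ(r,d),\,\ZZ)_{\rm tor}\,=\,0$, and Proposition \ref{prop1} yields
$$
{\rm Br}'(\sQ(r,d))\,=\,\bigl(H^2(\sQ(r,d),\,\ZZ)/\text{NS}(\sQ(r,d))\bigr)\otimes_\ZZ(\QQ/\ZZ),
$$
just as \eqref{e5} does for $\text{Sym}^d(X)$. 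Since the sequence of Proposition \ref{prop1} is natural in the variety, $\varphi^*$ on Brauer groups is $\varphi^*$ on $H^2/\text{NS}$ tensored with $\QQ/\ZZ$. Thus it suffices to prove that $\varphi^*\,:\,H^2(\text{Sym}^d(X),\,\ZZ)/\text{NS}\,\longrightarrow\,H^2(\sQ(r,d),\,\ZZ)/\text{NS}$ is an isomorphism.

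The second step is to compute $\varphi^*$ on $H^2$ explicitly. Writing $\Phi\,=\,\varphi\times\text{id}_X$, the torsion sheaf $\sO_X^{\oplus r}/\sF^{\univ}$ has support equal, by the very definition of $\varphi$, to $\Phi^{-1}(D^{\univ})$, so that $c_1(\sF^{\univ})\,=\,-\Phi^*[D^{\univ}]$. Combining this with Proposition \ref{p:classes} and the naturality of $i_x^*$ and of the slant product under $\Phi$, I expect to obtain $\varphi^*\eta\,=\,-c$ (see \eqref{x}) and $\varphi^*(\lambda_i\cup\lambda_j)\,=\,\overline{\alpha}_i\cup\overline{\alpha}_j$, up to the duality identifying $\alpha_i$ with $\alpha_i^\vee$, which is harmless after taking cup products. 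In view of \eqref{phi3} and Proposition \ref{prop3}, $\varphi^*$ then carries the basis $\{\eta\}\cup\{\lambda_i\cup\lambda_j\}_{i<j}$ of $H^2(\text{Sym}^d(X),\,\ZZ)$ onto the sublattice spanned by $\{c\}\cup\{\overline{\alpha}_i\cup\overline{\alpha}_j\}_{i<j}$. Hence $\varphi^*$ is injective on $H^2$ with cokernel the free rank-one group generated by $\gamma_2$.

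For the final step I would use that $\gamma_2$ is algebraic. Indeed $\gamma_2$ is the slant product of the Chern class $c_2(\sF^{\univ})$ with the fundamental class of $X$, hence a class of type $(1,1)$, so $\gamma_2\,\in\,\text{NS}(\sQ(r,d))$. Since $\varphi^*$ is a morphism of integral Hodge structures, being the pullback along an algebraic morphism, it preserves Hodge type: thus $\varphi^*(\text{NS})\subseteq\text{NS}$, and conversely $\varphi^*\beta\in\text{NS}(\sQ(r,d))$ forces $\beta$ to be of type $(1,1)$, hence $\beta\in\text{NS}(\text{Sym}^d(X))$, using injectivity on $H^2$. This gives injectivity on $H^2/\text{NS}$; surjectivity follows from $H^2(\sQ(r,d),\,\ZZ)\,=\,\varphi^*H^2(\text{Sym}^d(X),\,\ZZ)\oplus\ZZ\gamma_2$ with $\gamma_2\in\text{NS}$, so every class is congruent modulo $\text{NS}$ to one in the image of $\varphi^*$. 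Therefore $\varphi^*$ is an isomorphism on $H^2/\text{NS}$, and the lemma follows.

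The main obstacle I anticipate is the Chern-class identity $c_1(\sF^{\univ})\,=\,-\Phi^*[D^{\univ}]$ together with the slant-product and duality bookkeeping needed to match $\{\lambda_i\cup\lambda_j\}$ with $\{\overline{\alpha}_i\cup\overline{\alpha}_j\}$ and to pin down the cokernel of $\varphi^*$ as exactly $\ZZ\gamma_2$. Once these identifications are in place, together with the routine verification that $\gamma_2$ is of type $(1,1)$, the Hodge-theoretic conclusion is immediate.
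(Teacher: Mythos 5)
Your proposal is correct and shares the paper's overall skeleton: torsionfreeness of $H^3(\sQ(r,d),\ZZ)$ via Theorem \ref{thm1}, reduction through Proposition \ref{prop1} to the quotient $H^2/\mathrm{NS}$, the decomposition $H^2(\sQ(r,d),\ZZ)=\mathrm{image}(\varphi^*)\oplus\ZZ\gamma_2$ from Proposition \ref{prop3}, and the observation that $\gamma_2$ is algebraic. Where you genuinely diverge is in identifying the image of $\varphi^*$: the paper restricts to a fibre $\varphi^{-1}(y)\cong(\mathbb{CP}^{r-1})^d$ over a reduced divisor, uses $H^1(\varphi^{-1}(y),\ZZ)=0$ to see that cup products of degree-one classes come from the base, and uses triviality of ${\mathcal F}^{\univ}$ on that fibre to see that $c$ does too; you instead compute $\varphi^*$ on the nose from the identity $c_1({\mathcal F}^{\univ})=-\Phi^*[D^{\univ}]$ together with Proposition \ref{p:classes}. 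That identity, which you rightly flag as the main thing to justify, is in fact essentially definitional: $\varphi$ is the classifying map to $\mathrm{Sym}^d(X)=\mathrm{Div}^d(X)$ determined by the divisor of the universal quotient, so $\Phi^*{\mathcal O}(D^{\univ})\cong\det({\mathcal O}_X^{\oplus r}/{\mathcal F}^{\univ})=\det({\mathcal F}^{\univ})^{-1}$. Your route is arguably cleaner, since it gives $\varphi^*\eta=-c$ and $\varphi^*(\lambda_i\cup\lambda_j)=\overline{\alpha}_i\cup\overline{\alpha}_j$ explicitly rather than inferring membership in the image via a Leray-type argument, and your Hodge-theoretic treatment of injectivity and surjectivity on $H^2/\mathrm{NS}$ is more careful than the paper's one-line assertion. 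Two small points to tidy up: the lemma as stated covers all $d$, so you should dispose of $d\le 1$ as the paper does (both Brauer groups vanish there, and Proposition \ref{prop3} requires $d\ge 2$); and the sign/duality bookkeeping between $\alpha_i$ and $\alpha_i^\vee$ in the slant products should be recorded once, though as you note it is harmless after taking cup products.
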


\begin{proof}
Note that $\text{Br}'({\mathcal Q}(r,d))\,=\, \text{Br}'(\text{Sym}^{d}(X))
\,=\,0$ if $d\,\leq\, 1$. Therefore, we assume that $d\, \geq\, 2$.

The cohomology group $H^3({\mathcal Q}(r,d),\, {\mathbb Z})$ is torsionfree.
Indeed, this follows from Theorem \ref{thm1} and the fact that $H^*(\text{Sym}^n(X),
\, {\mathbb Z})$ is torsionfree \cite[p. 329, (12.3)]{Ma}. Therefore, Proposition
\ref{prop1} says that
\begin{equation}\label{brq}
\text{Br}'({\mathcal Q}(r,d))\,=\, (H^2({\mathcal Q}(r,d),\, {\mathbb Z})/
\text{NS}({\mathcal Q}(r,d)))\otimes_{\mathbb Z}({\mathbb Q}/{\mathbb Z})\, .
\end{equation}

Let
$$
\varphi'\, :\, H^2(\text{Sym}^{d}(X),\, {\mathbb Z})\,\longrightarrow\,
H^2({\mathcal Q}(r,d),\, {\mathbb Z})
$$
be the pullback homomorphism using $\varphi$ in \eqref{v1}. Recall from
Theorem \ref{t:macdonald} the description of $H^2(\text{Sym}^{d}(X),\,
{\mathbb Z})$. From Proposition
\ref{prop3} we conclude that $\varphi'$ is injective, and
\begin{equation}\label{tc}
H^2({\mathcal Q}(r,d),\, {\mathbb Z})\,=\, \text{image}(\varphi')\oplus
{\mathbb Z}\cdot \gamma_2\, ,
\end{equation}
where $\gamma_2$ is the cohomology class in Proposition \ref{prop3}.
Take any point $$y\,:=\, (y_1\, ,\cdots\, ,y_{d})\, \in\, \text{Sym}^{d}(X)$$ such
that all $y_i$ are distinct. Then $\varphi^{-1}(y)$ is a product of copies
of ${\mathbb C}{\mathbb P}^{r-1}$, hence $$H^1(\varphi^{-1}(y),\, {\mathbb Z})
\,=\, 0\, .$$ From this it follows that the image of the cup product
$$
H^1({\mathcal Q}(r,d),\, {\mathbb Z})\otimes H^1({\mathcal Q}(r,d),\, {\mathbb Z})
\,\longrightarrow\,H^2({\mathcal Q}(r,d),\, {\mathbb Z})
$$
is in the image $\varphi'$. If the point $x\, \in\, X$ in \eqref{x} is different from
all $y_i$, then the restriction of the universal vector bundle ${\mathcal F}^{\univ}$
(see \eqref{x}) to $\varphi^{-1}(y)$ is the trivial vector bundle of rank $r$. From
this it follows that $c$ is in the image of $\varphi'$.

{}From \eqref{tc} it follows immediately that
$$
\text{NS}({\mathcal Q}(r,d))\,=\, \varphi'(\text{NS}(\text{Sym}^{d}(X)))
\oplus {\mathbb Z}\cdot \gamma_2\, .
$$
In view of \eqref{brq}, from this we conclude that
$\varphi^*$ in \eqref{v} is an isomorphism if $d\, \geq\, 2$.
\end{proof}

As before, fix a point $x_0\, \in\, X$. Let
\begin{equation}\label{dede}
\delta\, :\, {\mathcal Q}(r,d)\, \longrightarrow\, {\mathcal Q}(r,d+r)
\end{equation}
be the morphism that sends any ${\mathcal F}\, \subset\, {\mathcal O}^{\oplus r}_X$
represented by a point of ${\mathcal Q}(r,d)$ to the point representing
the subsheaf ${\mathcal F}\otimes{\mathcal O}_X(-x_0)\, \subset\, {\mathcal
O}^{\oplus r}_X$. Let
\begin{equation}\label{delta}
\delta^*\,:\, \text{Br}'({\mathcal Q}(r,d+r))\, \longrightarrow\,
\text{Br}'({\mathcal Q}(r,d))
\end{equation}
be the pullback homomorphism by $\delta$.

\begin{corollary}\label{cor-p}
For any $d\, \geq\, 2$, the homomorphism $\delta^*$ in \eqref{delta}
is an isomorphism.
\end{corollary}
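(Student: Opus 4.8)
The plan is to reduce Corollary \ref{cor-p} to the isomorphism results already established for symmetric products, namely Lemma \ref{lem1} and Lemma \ref{lem3}, by exhibiting a commutative square that relates $\delta$ to a morphism of symmetric products through the support morphisms $\varphi$.

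First I would identify the morphism of symmetric products through which $\delta$ factors at the level of supports. Let $g\, :\, \text{Sym}^d(X)\,\longrightarrow\, \text{Sym}^{d+r}(X)$ be the morphism $D\,\longmapsto\, D + r\cdot x_0$; this is the $r$--fold composition $f_{d+r-1}\circ\cdots\circ f_d$ of the maps in \eqref{e3}. The key geometric claim is the commutativity of
$$
\begin{matrix}
\mathcal{Q}(r,d) & \stackrel{\delta}{\longrightarrow} & \mathcal{Q}(r,d+r)\\
~\, \Big\downarrow\varphi && \ ~\Big\downarrow\varphi\\
\text{Sym}^d(X)& \stackrel{g}{\longrightarrow} & \text{Sym}^{d+r}(X)
\end{matrix}
$$
To verify it, I would take a point of $\mathcal{Q}(r,d)$ given by $\mathcal{F}\,\subset\,\mathcal{O}^{\oplus r}_X$ with torsion quotient $T\,=\,\mathcal{O}^{\oplus r}_X/\mathcal{F}$, so that $\varphi$ sends it to the divisor $\sum_{x}\text{length}(T_x)\cdot x$. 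The image $\delta(\mathcal{F})\,=\,\mathcal{F}\otimes\mathcal{O}_X(-x_0)$ lies in the factorization $\mathcal{F}(-x_0)\,\subset\,\mathcal{O}^{\oplus r}_X(-x_0)\,\subset\,\mathcal{O}^{\oplus r}_X$, which yields a short exact sequence
$$
0\,\longrightarrow\, T\otimes\mathcal{O}_X(-x_0)\,\longrightarrow\, \mathcal{O}^{\oplus r}_X/\mathcal{F}(-x_0)\,\longrightarrow\, (\mathcal{O}_X/\mathcal{O}_X(-x_0))^{\oplus r}\,\longrightarrow\, 0\, .
$$
Since $T$ is torsion, tensoring by the line bundle $\mathcal{O}_X(-x_0)$ preserves the length at each point, hence preserves the associated divisor; the third term contributes exactly $r\cdot x_0$. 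Thus $\varphi(\delta(\mathcal{F}))\,=\,\varphi(\mathcal{F}) + r\cdot x_0\,=\, g(\varphi(\mathcal{F}))$, establishing commutativity.

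Passing to cohomological Brauer groups, the square gives $\delta^*\circ\varphi^*\,=\,\varphi^*\circ g^*$, where the right-hand $\varphi^*$ is the pullback for $\mathcal{Q}(r,d+r)$ and the left-hand one is for $\mathcal{Q}(r,d)$. Both of these are isomorphisms by Lemma \ref{lem3}, since $d\,\geq\, 2$ and therefore also $d+r\,\geq\, 2$. Moreover $g^*\,=\, f_d^*\circ\cdots\circ f_{d+r-1}^*$, and each factor $f_j^*$ with $j\,\geq\, d\,\geq\, 2$ is an isomorphism by Lemma \ref{lem1}, so $g^*$ is an isomorphism. Expressing $\delta^*$ from this relation as the composition of the three isomorphisms $\varphi^*$ (for $\mathcal{Q}(r,d)$), $g^*$, and the inverse of $\varphi^*$ (for $\mathcal{Q}(r,d+r)$) then shows that $\delta^*$ is an isomorphism.

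The step I expect to be the main obstacle is the commutativity verification, specifically the computation of the support divisor of $\mathcal{O}^{\oplus r}_X/\mathcal{F}(-x_0)$: one must confirm that tensoring the torsion quotient $T$ by $\mathcal{O}_X(-x_0)$ leaves its associated divisor unchanged, so that the only new contribution is the $r\cdot x_0$ coming from $\mathcal{O}^{\oplus r}_X/\mathcal{O}^{\oplus r}_X(-x_0)$. Once this geometric identity is secured, the remainder is a formal diagram chase combining Lemma \ref{lem1} and Lemma \ref{lem3}.
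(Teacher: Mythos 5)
Your proposal is correct and follows essentially the same route as the paper: the paper also forms the commutative square relating $\delta$ to the map $\mathrm{Sym}^d(X)\to\mathrm{Sym}^{d+r}(X)$, $D\mapsto D+r\cdot x_0$, via the support morphisms, and then combines Lemma \ref{lem3} (applied to both Quot schemes) with Lemma \ref{lem1} (iterated $r$ times) to conclude. Your explicit verification of the commutativity via the length computation for $\mathcal{O}^{\oplus r}_X/\mathcal{F}(-x_0)$ is a detail the paper leaves implicit, but it is the same argument.
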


\begin{proof}
As in \eqref{v1}, define
$$
\psi\, :\, {\mathcal Q}(r,d+r)\, \longrightarrow\, \text{Sym}^{d+r}(X)
$$
to be the morphism that sends any subsheaf ${\mathcal F}\,\subset\,
{\mathcal O}^{\oplus r}_X$ to the scheme theoretic support of the corresponding quotient
$(\bigwedge^r{\mathcal O}^{\oplus r}_X)/(\bigwedge^r{\mathcal F})$. Let
$$
h\, :\, \text{Sym}^d(X)\,\longrightarrow\, \text{Sym}^{d+r}(X)
$$
be the morphism defined by $\sum_{x\in X}n_x\cdot x\, \longmapsto\,
r\cdot x_0+\sum_{x\in X}n_x\cdot x$. The following diagram of morphisms
$$
\begin{matrix}
{\mathcal Q}(r,d) & \stackrel{\delta}{\longrightarrow} & {\mathcal Q}(r,d+r)\\
~\Big\downarrow \varphi &&~\Big\downarrow \psi\\
\text{Sym}^d(X) & \stackrel{h}{\longrightarrow} & \text{Sym}^{d+r}(X)
\end{matrix}
$$
is commutative, where $\varphi$ and $\delta$ are defined in \eqref{v1} and 
\eqref{dede} respectively. Consider the corresponding commutative diagram
$$
\begin{matrix}
\text{Br}'(\text{Sym}^{d+r}(X)) &\stackrel{h^*}{\longrightarrow}&
\text{Br}'(\text{Sym}^{d}(X))\\
~\Big\downarrow \psi^* &&~\Big\downarrow \varphi^* \\
\text{Br}'({\mathcal Q}(r,d+r)) & \stackrel{\delta^*}{\longrightarrow} &
\text{Br}'({\mathcal Q}(r,d))
\end{matrix}
$$
of homomorphisms.
If $d\, \geq\, 2$, from Lemma \ref{lem3} we know that $\psi^*$ and
$\varphi^*$ are isomorphisms, while Lemma \ref{lem1} implies that
$h^*$ is an isomorphism. Therefore, the homomorphism $\delta^*$ is an isomorphism.
\end{proof}

\begin{remark}\label{rem-r}
We are grateful to an unknown referee for this comment. We give here an
alternative proof of the fact that the pullback map $\varphi^*$ induces an 
isomorphism on cohomology. Consider 
the big cell of the Bia{\l}ynicki-Birula decomposition described above. It
corresponds to the partition 
\[
{ \bf m}_1 \,=\, (0\, ,0\, ,0\, ,\cdots\, , d).
\]
We have a Zariski locally trivial fibration 
\[
\rho\,:\, \sym^{{\bf m}_1}(X)^+ \,\longrightarrow\, \sym^{{\bf m}_1}(X)
\]
with fiber ${\mathbb A}^n$, see \cite[p. 492]{BB}.
We claim that we have an induced isomorphism 
\[
{\rm Br}'(\sym^{{\bf m}_1}(X)^+) \cong {\rm Br}'(\sym^{{\bf m}_1}(X)).
\]
To see this we will make use of the exact sequence in Proposition \ref{prop1} which 
is valid for non-compact spaces, see \cite[p. 878]{Sc}. 
The morphism $\rho$ induces an isomorphism in cohomology groups as
it has contractible fibers.
Although the morphism $\rho$ may not be a vector bundle, the N\'{e}ron-Severi
groups of the two varieties agree under the identification of
cohomology groups above, see \cite[p. 22, Proposition 1.9]{fulton}. It follows now
from Proposition \ref{prop1} and the 5 lemma that $\rho^*$ induces an isomorphism 
on cohomological Brauer groups.

The morphism $\varphi\,:\,\sym^{{\bf m}_1}(X)\,\longrightarrow\, \sym^d(X)$ is an isomorphism.
So we have a diagram 
\begin{center}
\begin{tikzpicture}
\node (TL) at (0,0) {${\rm Br}'(\sym^{{\bf m}_1}(X)^+)$};
\node (TR) at (4,0) {${\rm Br}'({\mathcal Q}(r,d))$};
\node (B) at (0,-2) {${\rm Br}'( \sym^{d}(X))$,};
\draw [left hook->] (TR) edge node [above] {$\iota^*$} (TL) ; 
\draw [->] (B) edge node [left] {$\varphi^*$} (TL);
\draw [->] (B) edge node [right] {$\varphi^*$} (TR);
\end{tikzpicture}
\end{center}
$\iota$ is the composition $\sym^{{\bf m}_1}(X)^+\,\hookrightarrow\,
\sym^{{\bf m}_1}(X)\,\longrightarrow\, {\mathcal Q}(r,d)$;
we note that the homomorphism $\varphi^*$ in the left is an isomorphism.
The map $i^*$ is injective by \cite[IV, Corollary 2.6]{milne}. We can
now deduce that $\varphi^*$ is an isomorphism.
\end{remark}

\section*{Acknowledgements}

We thank the two referees for detailed and helpful comments.
Remark \ref{rem-r} is due to a referee. The first-named
author is supported by a J. C. Bose Fellowship.

\end{document}